\documentclass[a4paper,11pt,reqno]{amsart}

\usepackage{amssymb}
\usepackage{siunitx}
\usepackage{epsfig}
\usepackage{amsfonts,amsrefs}
\usepackage{amsmath}
\usepackage{euscript}
\usepackage{amscd}
\usepackage{amsthm}
\usepackage{enumitem}
\DeclareMathAlphabet{\mathpzc}{OT1}{pzc}{m}{it}
\usepackage{enumitem}
\usepackage{color}
\usepackage{mathtools}
\usepackage[hypertexnames=false, colorlinks, citecolor=red, linkcolor=blue, urlcolor=red]{hyperref}
\usepackage[utf8]{inputenc}
\usepackage[T1]{fontenc} 
\usepackage{marginnote}

\usepackage[normalem]{ulem}

\newcommand{\marginextend}[1]{ \addtolength{\oddsidemargin}{-#1}  \addtolength{\evensidemargin}{-#1}
	\addtolength{\textwidth}{#1}\addtolength{\textwidth}{#1}}
\newcommand{\updownextend}[1]{ \addtolength{\topmargin}{-#1}  \addtolength{\textheight}{#1}
	\addtolength{\textheight}{#1}}
\marginextend{1.5cm}
\updownextend{0cm}
\allowdisplaybreaks[4]

\usepackage{pst-node}
\usepackage{tikz-cd}
\usepackage{mathrsfs}
\usepackage[most]{tcolorbox}

\DeclareFontFamily{OT1}{pzc}{}
\DeclareFontShape{OT1}{pzc}{m}{it}{<-> s * [1.10] pzcmi7t}{}
\DeclareMathAlphabet{\mathpzc}{OT1}{pzc}{m}{it}

\DeclareSymbolFont{SY}{U}{psy}{m}{n}
\DeclareMathSymbol{\emptyset}{\mathord}{SY}{'306}

\theoremstyle{plain}

\newtheorem{thm}{Theorem}[section]
\newtheorem*{thm*}{Theorem}
\newtheorem{cor}[thm]{Corollary}
\newtheorem{lem}[thm]{Lemma}
\newtheorem{prop}[thm]{Proposition}
\newtheorem{defn}[thm]{Definition}
\newtheorem{rem}[thm]{Remark}

\newtheoremstyle{mainthmstyle}%
  {}{}
  {\itshape}
  {}
  {\bfseries}
  {}
  {0pt}
  {\thmname{#1}. \thmnote{#3}} 

\theoremstyle{mainthmstyle}

\newtheoremstyle{named}{}{}{\itshape}{}{\bfseries}{.}{.5em}{#1 \thmnote{#3}}
\theoremstyle{named}

\newenvironment{maintheorem}
{\par\noindent\textbf{Main Theorem.}\itshape}
{\par}

\tcolorboxenvironment{maintheorem}{
  colback=white,
  colframe=black,
  boxrule=0.8pt
}

\numberwithin{equation}{section}

\def\beq{\begin{eqnarray}}
	\def\eeq{\end{eqnarray}}
\def\beqa{\begin{eqnarray*}}
	\def\eeqa{\end{eqnarray*}}

\def\M{\boldsymbol M}

\newcommand{\be}{\begin{equation}}
	\newcommand{\ee}{\end{equation}}
\newcommand{\bea}{\begin{eqnarray}}
	\newcommand{\eea}{\end{eqnarray}}
\newcommand{\Bea}{\begin{eqnarray*}}
	\newcommand{\Eea}{\end{eqnarray*}}

\newcommand{\clipnorm}{$_{\text{\rm c}}$Lip-norm}
\newcommand{\clipnorms}{$_{\text{\rm c}}$Lip-norms}

\newcounter{cnt1}
\newcounter{cnt2}
\newcounter{cnt3}
\newcommand{\blr}{\begin{list}{$($\roman{cnt1}$)$}
		{\usecounter{cnt1} \setlength{\topsep}{0pt}
			\setlength{\itemsep}{0pt}}}
	\newcommand{\bla}{\begin{list}{$($\alph{cnt2}$)$}
			{\usecounter{cnt2} \setlength{\topsep}{0pt}
				\setlength{\itemsep}{0pt}}}
		\newcommand{\bln}{\begin{list}{$($\arabic{cnt3}$)$}
				{\usecounter{cnt3} \setlength{\topsep}{0pt}
					\setlength{\itemsep}{0pt}}}
			\newcommand{\el}{\end{list}}
		
		\vspace{5mm}

		\title{Metric Dimension and Product Entropy of Group $C^{\ast}$-algebras}
		\author[A. Chattopadhyay]{Arnab Chattopadhyay}
		\author[S. Joardar]{Soumalya Joardar}

        \address[A. Chattopadhyay]{Indian Institute of Science Education And Research Kolkata, Mohanpur 741246, Nadia, West Bengal, India} \email{ac23rs002@iiserkol.ac.in}

        \address[S. Joardar]{Indian Institute of Science Education And Research Kolkata, Mohanpur 741246, Nadia, West Bengal, India} \email{soumalya@iiserkol.ac.in}
		
\begin{document}

\begin{abstract}
 We consider reduced group $C^{\ast}$-algebras of finitely generated discrete groups metrized by seminorms obtained from word length functions as in \cite{Christensen}. We study the metric dimension of such $C^{\ast}$-algebras as defined in \cite{Kerr}. We also study the product entropy of the automorphisms of group $C^{\ast}$-algebras induced by the automorphisms of the underlying groups. We get a lower bound and an upper bound of the product entropy of an automorphism in terms of the classical group theoretic algebraic and geometric entropy of the automorphisms, provided the group has polynomial growth property. For groups with exponential growth, we show that the metric dimension of the group $C^{\ast}$-algebras is generically $+\infty$. 
\end{abstract}
\maketitle
\section{Introduction} Compact quantum metric spaces are noncommutative generalizations of classical compact topological spaces. It was initially proposed by A. Connes within his spectral triple framework (\cite{Connes1}). Then M. Rieffel came up with a formal definition using what he called Lip-norms on $C^{\ast}$-algebras (\cite{Rieffel1}). Recall that a densely defined seminorm on a unital $C^{\ast}$-algebra $A$ with one dimensional kernel produces a metric on the state space $S(A)$ (\cite{Rieffel1}). Then such a seminorm is called a Lip-norm if the metric induces the weak$^\ast$-topology on the state space.  A rich vein of examples comes from the reduced group $C^{\ast}$-algebras of discrete groups. In \cite{Rieffel2}, it was shown that $C^{\ast}_{r}(\mathbb{Z}^{d})$ and the hyperbolic group $C^{\ast}$-algebras are examples of compact quantum metric spaces where the Lip-norms come from word length functions. Then in \cite{Christensen}, Christensen and Antonescu constructed a sequence of eventual Lip-norms induced from a proper length function on the reduced $C^{\ast}$-algebras of discrete groups which enjoy the rapid decay property.\\
\indent One of the key concepts of dynamical systems is that of entropy. It roughly measures the chaos in a system. Many attempts have been made to generalize the notion of classical entropy to the realm of operator algebras. Some such notions are Brown-Voiculescu entropy (\cites{Voiculescu, Brown}), Connes-Stormer entropy (\cites{Connesstormer}) and its generalization Connes-Narnhofer-Thirring entropy (\cite{CNT}), Sauvageot-Thouvenot entropy (\cite{Sauvageot}). With the introduction of compact quantum metric spaces, it is natural to consider metric dimension and entropy of those spaces that would generalize the classical metric dimension (for example the Kolmogorov dimension of a compact metric space) and the classical topological entropy. It was considered in the paper \cite{Kerr} in which David Kerr proposed the notion of metric dimension and product entropy of compact quantum metric spaces. They naturally become bi-Lipschitz $C^{\ast}$-algebraic invariants. In that paper, after defining the metric dimension and the product entropy, Kerr goes on to prove that the metric dimension of $C(X)$ coincides with the Kolmogorov dimension of $X$ under the canonical choice of Lip-norm induced by the metric $d$ of $X$. In \cite{Kerr}, some explicit computations of the product entropy of some automorphisms of the non-commutative torus and $M_{p^{\infty}}$ have been done. Surprisingly, after \cite{Kerr}, there is hardly any follow up. It is quite natural to see some more computations of metric dimension as well as product entropy of some other compact quantum metric spaces. As already mentioned, if $\Gamma$ is some discrete group having the rapid decay property, its reduced group $C^{\ast}$-algebra $C^{\ast}_{r}(\Gamma)$ becomes a compact quantum metric space. In this paper, we show that if $\Gamma$ has polynomial growth, then $C^{\ast}_{r}(\Gamma)$ has finite metric dimension. We slightly modify the definition of the product entropy due to Kerr to make it independent of the CQMS structure on $C^{\ast}_{r}(\Gamma)$. Given an automorphism $\phi$ of $C^{\ast}_{r}(\Gamma)$, the entropy of $\phi$ in our sense roughly measures the growth of complexity of finite subsets $\Omega$ of the unit ball in the group algebra $C_{c}(\Gamma)$ under the iteration of $\phi$. It coincides with the definition given by Kerr for the choice of CQMS structures on $C^{\ast}_{r}(\Gamma)$ considered in this paper. The consideration of unit ball in the group algebra might seem restrictive at first glance. But Theorem \ref{entpZ} shows that consideration of unit ball of the group algebra is good enough to recover the classical entropy of automorphisms of the abelian group $\mathbb{Z}^d$ for any $d$. Recall that any automorphism of the group $\Gamma$ produces an automorphism of $C^{\ast}_{r}(\Gamma)$. We study the product entropy of such automorphisms. For a large class of such automorphisms (like the inner automorphisms, see Theorem \ref{innerzero}), the entropy turns out to be zero, provided $C^{\ast}_{r}(\Gamma)$ has finite metric dimension. The case of abelian groups reduces to the case of the tori via the Fourier transform. For a general discrete group $\Gamma$ and an automorphism $\phi$ of $\Gamma$, we show that the product entropy is always greater than or equal to the classical algebraic entropy (in the sense of \cite{MR2491886}) of $\phi$. The classical algebraic entropy of an automorphism of a finitely generated discrete group is well-known, particularly for groups with polynomial growth. By Gromov's Theorem \cite{MR623534}, finitely-generated groups of polynomial growth are virtually nilpotent. Although, for torsionless virtually nilpotent groups, we have partial information about the algebraic entropy due to the lack of additivity theorem, it is quite useful to obtain the lower bound of the product entropy (see the discussion after Remark \ref{Lips}). Torsion nilpotent groups do admit such additivity property (see \cite{shlossberg2026addition}*{Theorem 4.5}) and algebraic entropy can be computed explicitly. But as we consider finitely generated discrete groups, torsion groups are essentially finite and therefore does not have any kind of growth. Therefore, we only consider torsionless groups in this paper. To get an upper bound of the product entropy, the finiteness of the metric dimension becomes crucial. For groups with polynomial growth, we get upper bound combining the growth exponent of the group and the geometric entropy (see Definition \ref{geoment}) of the auotomorphism. The geometric entropy of an automorphism of a virtually nilpotent group is completely known. Therefore, both the bounds are `nice' in the sense that for groups of polynomial growth, we have fair bit of knowledge of both the upper and lower bounds from the study of classical entropy of endomorphisms of the groups. This helps us to produce finite non-zero product entropy of a large class of automorphisms on groups of polynomial growth.  This is quite interesting, as Kerr notes in \cite{Kerr2} that it is in general a challenge to produce non-zero finite entropy for non-commutative $C^{\ast}$-algebraic dynamics. This belief is further reinforced in a recent work (\cite{new2025}) in which it is shown that infinite entropy occurs generically for automorphisms of certain classifiable $C^{\ast}$-algebras. In case of noncommutative torus, however, the toral automorphisms do admit non-zero finite entropy. But in the realm of group $C^{\ast}$-algebras, the occurrence of finite non-zero entropy is new. Calculating the Brown-Voiculescu entropy for examples such as $C^{\ast}_{r}(H_3 (\mathbb Z))$ seems to be difficult. But here we get nice bounds for the product entropy for automorphisms of $C^{\ast}_{r}(\Gamma)$ for any group $\Gamma$ with polynomial growth. For groups with exponential growth admitting the rapid decay property, we show that the metric dimension of the associated reduced group $C^{\ast}$-algebras is always $+\infty$ (see Theorem \ref{expoinfty}). The product entropy of the trivial automorphism of a finitely generated discrete group with exponential growth is $+\infty$. We believe that the product entropy of any automorphism of groups with exponential growth is $+\infty$. It is worthwhile to mention that Kerr considers entropy of bi-Lipschitz automorphisms. But to define the entropy, the bi-Lipschitzness is not really needed. In fact, the automorphisms considered in this paper are rarely Lipschitz.  We also note that a lot of computations can be done without the exact Leibnitz property of the Lip-norm as demanded by Kerr. In fact, the natural Lip-norms on $C^{\ast}_{r}(\Gamma)$ do not admit the Leibnitz property. But they satisfy a `higher order' Leibnitz-type property (look at Lemma \ref{higherleibnitz}) which aids our computations. 
\section{Preliminaries}
\subsection{Compact quantum metric spaces}
We begin this subsection with the definition of compact quantum metric space (CQMS for short). Compact quantum metric spaces can be defined for order unit spaces. But we shall deal with only unital $C^{\ast}$-algebras in this paper and therefore we define CQMS for unital $C^{\ast}$-algebras. We refer the reader to \cite{Rieffel1} for more details. Let $A$ be a unital $C^{\ast}$-algebra with unit $1_{A}$. A seminorm $L:A\rightarrow[0,+\infty]$ on $A$ possibly taking the value $+\infty$ is said to separate the state space $S(A)$ if given $\phi,\psi\in S(A)$ there is an element $a\in A$ such that $L(a)<+\infty$ and $\phi(a)\neq\psi(a)$. Given a separating seminorm $L$ on $A$, we denote the set $\{a\in A:L(a)<r\}$ by $\mathcal{L}_{r}$. The set $\{a\in A:L(a)<+\infty\}$ will be denoted by $\mathcal{L}$ and will be called the domain of $L$. One can define a metric on the state space $S(A)$ by the following formula:
\begin{displaymath}
    d_{L}(\phi,\psi)=\sup\limits_{a\in\mathcal{L}_{1}}\lvert\phi(a)-\psi(a)\rvert,
\end{displaymath}
for $\phi,\psi\in S(A)$. \\
A \clipnorm\ on $A$ is a separating seminorm $L:A\rightarrow[0,+\infty]$ such that
\begin{itemize}
\item $L(a^{\ast})=L(a)$ for all $a\in A$.
    \item ${\textrm Ker}(L)=\mathbb{C}1_{A}$.
    \item $d_{L}$ metrizes the weak $\ast$-topology on $S(A)$.
\end{itemize}
\begin{defn}\label{CQMSdefn}
    A compact quantum metric space is a pair $(A,L)$ such that $A$ is a unital $C^{\ast}$-algebra and $L$ is a \clipnorm\ on $A$.
\end{defn}
\begin{rem}
    We have taken the definition of a \clipnorm\ as given in Definition 2.3 of \cite{Kerr}. Although M. Rieffel originally considered Lip-norms for his definition of compact quantum metric spaces, we stick to the \clipnorms\ in the sense of \cite{Kerr}. In our case as $\mathcal{L}$ will always be dense in $A$, $L$ will separate points in $S(A)$ in the sense of \cite{Kerr}. 
\end{rem}
For examples of CQMS the reader is referred to \cites{Rieffel2,Rieffel-1998-Metrics-on-state-from-action-of-cmpt-gp}. We are only going to discuss the following class of examples needed for our purpose.\vspace{0.05in}\\
 \textbf {Group $C^{\ast}$-algebras}: Let $\Gamma$ be a finitely generated discrete group equipped with a length function $\ell$. $\Gamma$ is said to have
 \begin{itemize}
     \item polynomial growth with growth exponent $r \geq 1$ if there exists $C > 0$ such that for all $n \in \mathbb N$ $$\left \lvert \left \{g \in \Gamma\ :\ \ell (g) \leq n \right \} \right \rvert \leq C (1 + n)^{r};$$
     \item rapid decay property with decay exponent $r > 0$ if there exists $C > 0$ such that for all $f \in C_c (\Gamma)$ $$\|f\|_{\mathrm {red}} \leq C \left (\sum\limits_{g \in \Gamma} \left \lvert f (g) \right \rvert^2 \left (1 + \ell (g) \right )^{2 r} \right )^{\frac {1} {2}}.$$
 \end{itemize}
 It is well known that polynomial growth implies the rapid decay property. Now, let $\Gamma$ be a finitely generated discrete group equipped with a length function such that $\Gamma$ has the rapid decay property. Then $C^{\ast}_{r}(\Gamma)$ is the reduced group $C^{\ast}$-algebra faithfully represented on the Hilbert space $\ell^{2}(\Gamma)$. We shall denote the representation by $\lambda$. Recall the dense $\ast$-subalgebra $C_{c}(\Gamma):=\{\sum_{g\in F\subseteq\Gamma}c_{g}\delta_{g}:c_{g}\in\mathbb{C}\}$, where $F$ is some finite subset of the group $\Gamma$. By definition, for any $f\in C_{c}(\Gamma)$, $\lvert\lvert f\rvert\rvert_{\textrm{red}}=\lvert\lvert\lambda(f)\rvert\rvert_{\textrm{op}}$. Then using the length function, one can define the following unbounded self adjoint operator on $\ell^{2}(\Gamma)$ with dense domain $C_{c}(\Gamma)$:
 \begin{displaymath}
     M_{\ell}\left(\sum\limits_{g\in F\subseteq\Gamma}c_{g}\delta_{g} \right )=\sum\limits_{g\in F\subseteq\Gamma}\ell(g)\ c_{g}\delta_{g}.
 \end{displaymath}
 Then it is well known that $[M_{\ell},\lambda(f)]\in B(\ell^{2}(\Gamma))$ for all $f\in C_{c}(\Gamma)$. In fact, one can take repeated commutators and produce a sequence of bounded operators $\Delta_{\ell}^{k}(f)$ for a fixed $f\in C_{c}(\Gamma)$ i.e. $\Delta^{k}_{\ell}(f)$ is given inductively by
 \begin{displaymath}
    \Delta_{\ell}^{1}(f):=[M_{\ell}, \lambda(f)], \  \Delta^{k}_{\ell}(f):=[M_{\ell},\Delta^{k-1}(f)].
 \end{displaymath}
 Note that for $f_{1},f_{2}\in C_{c}(\Gamma)$, $\lambda(f_{1})\lambda(f_{2})=\lambda(f_{1}\ast f_{2})$, where $\ast$ is the convolution product of the group algebra. We shall denote $f_{1}f_{2}$ to denote the convolution product of two elements $f_{1},f_{2}\in C_{c}(\Gamma)$. Then for $f_{1},f_{2}\in C_{c}(\Gamma)$, $f_{1}f_{2}\in C_{c}(\Gamma)$ so that $\Delta^{k}_{\ell}(f_{1}f_{2}\ldots f_{n})\in B(\ell^{2}(\Gamma))$ for any $f_{1},f_{2},\ldots,f_{n}\in C_{c}(\Gamma)$.
 The following lemma says that $\Delta_{\ell}^{k}$ satisfies a higher order Leibnitz property. We have decided to keep the proof, which is a lengthy computation, in the Appendix section. This `higher order Leibnitz property' is going to be very useful for computations.
\begin{lem} \label{higherleibnitz}
For all $k, n \in \mathbb N$ we have \Bea \Delta_{\ell}^{k} \left (f_{1}f_{2} \cdots f_{n} \right ) & = & \sum\limits_{r_{1} + r_{2} + \cdots + r_{n} = k} \dbinom {k} {r_{1}\ r_{2}\ \cdots\ r_{n}} \prod\limits_{i = 1}^{n} \Delta_{\ell}^{r_{i}} \left (f_{i} \right ), \Eea where $\dbinom {k} {r_{1}\ r_{2}\ \cdots\ r_{n}} = \dfrac {k!} {r_{1}!\ r_{2}!\ \cdots\ r_{n}!}$ and $f_{i} \in C_{c}(\Gamma)$ for $i = 1, 2, \cdots, n.$ 
\end{lem}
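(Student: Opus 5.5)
We write $\delta(T):=[M_{\ell},T]$ and read $\Delta_{\ell}^{0}(f):=\lambda(f)$, so that $\Delta_{\ell}^{k}(f)=\delta^{k}(\lambda(f))$. The plan is to recognise $\delta$ as a derivation on a suitable algebra of operators and then obtain the formula as the multinomial Leibniz rule for powers of a derivation. All computations will be done algebraically on the dense subspace $C_{c}(\Gamma)$, and the identity will be transferred to the bounded extensions at the end.

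\textbf{Setting up the algebra.} Let $\mathcal{E}$ be the algebra of all linear maps $C_{c}(\Gamma)\to C_{c}(\Gamma)$. Both $M_{\ell}$ and $\lambda(f)$, for $f\in C_{c}(\Gamma)$, leave $C_{c}(\Gamma)$ invariant, since $\lambda(f)\delta_{h}=\sum_{g}c_{g}\delta_{gh}$ is a finite sum. Hence every $\delta^{k}(\lambda(f))$ and every finite product of such operators, restricted to $C_{c}(\Gamma)$, lies in $\mathcal{E}$. On $\mathcal{E}$ the map $\delta(T)=M_{\ell}T-TM_{\ell}$ is defined everywhere and satisfies
\[
\delta(ST)=\delta(S)T+S\delta(T),
\]
which is checked directly by expanding both sides.

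\textbf{Leibniz rule for two factors.} Induction on $k$ gives $\delta^{k}(ST)=\sum_{j=0}^{k}\binom{k}{j}\delta^{j}(S)\delta^{k-j}(T)$. The induction step applies $\delta$ to each term and uses Pascal's rule $\binom{k}{j-1}+\binom{k}{j}=\binom{k+1}{j}$.

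\textbf{General $n$.} We induct on $n$, writing $f_{1}\cdots f_{n}=(f_{1}\cdots f_{n-1})f_{n}$ and using $\lambda(f_{1}\cdots f_{n})=\lambda(f_{1}\cdots f_{n-1})\lambda(f_{n})$. The two-factor formula followed by the induction hypothesis gives
\[
\sum_{j}\binom{k}{j}\sum_{r_{1}+\cdots+r_{n-1}=j}\binom{j}{r_{1}\cdots r_{n-1}}\prod_{i<n}\delta^{r_{i}}(\lambda(f_{i}))\,\delta^{k-j}(\lambda(f_{n})).
\]
Setting $r_{n}=k-j$ and using $\binom{k}{j}\binom{j}{r_{1}\cdots r_{n-1}}=\binom{k}{r_{1}\cdots r_{n}}$ yields exactly the claimed sum. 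Alternatively, one can expand $\delta^{k}$ of a product of $n$ factors directly, with each application of $\delta$ landing on one factor, and count words.

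\textbf{Passing to bounded operators.} The only genuine subtlety is domain bookkeeping rather than algebra. Both sides of the identity are equal as maps on $C_{c}(\Gamma)$. By the fact quoted before the statement, each $\Delta_{\ell}^{r}(f_{i})$, and each $\Delta_{\ell}^{k}(f_{1}\cdots f_{n})$ since $f_{1}\cdots f_{n}\in C_{c}(\Gamma)$, is bounded on this dense domain. So both sides are bounded operators agreeing on the dense subspace $C_{c}(\Gamma)$. We still need that the product of the bounded extensions restricts on $C_{c}(\Gamma)$ to the composition computed in $\mathcal{E}$; this holds because each factor maps $C_{c}(\Gamma)$ into itself. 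Therefore the two sides agree on all of $\ell^{2}(\Gamma)$.
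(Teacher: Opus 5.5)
Your proof is correct. It shares its first half with the paper's: the paper also proves the two-factor rule (its Lemma A.1) by induction on $k$, using the first-order Leibniz identity and Pascal's rule on $C_{c}(\Gamma)$.

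The two routes diverge in how they pass to $n$ factors. The paper runs a double induction. It first obtains the first-order rule $P(1,n)$ for all $n$. It then inducts on $k$: it writes $\Delta_{\ell}^{k}=[M_{\ell},\Delta_{\ell}^{k-1}(\cdot)]$, applies the first-order rule to each product $\prod_{i}\Delta_{\ell}^{r_{i}}(f_{i})$, and regroups using the multinomial Pascal recursion $\binom{k}{s_{1}\ \cdots\ s_{n}}=\sum_{i}\binom{k-1}{s_{1}\ \cdots\ s_{i}-1\ \cdots\ s_{n}}$. You instead induct on $n$: you split off the last factor and use the full two-factor rule together with the single identity $\binom{k}{j}\binom{j}{r_{1}\ \cdots\ r_{n-1}}=\binom{k}{r_{1}\ \cdots\ r_{n}}$.

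Your version has a concrete advantage. The induction hypothesis is only ever applied to $\lambda(f_{1}\cdots f_{n-1})$, which has exactly the form in the statement, so the paper's Lemma A.1 would suffice verbatim. The paper's $k$-induction, by contrast, applies the first-order rule to products of operators $\Delta_{\ell}^{r_{i}}(f_{i})$ that are not of the form $\lambda(f)$. It therefore tacitly uses the derivation property of $[M_{\ell},\cdot\,]$ on general operators preserving $C_{c}(\Gamma)$, which is precisely what your algebra $\mathcal{E}$ makes explicit.

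Your last paragraph is also more careful than the paper's one-line ``extended to all of $\ell^{2}(\Gamma)$''. The observation that each factor maps $C_{c}(\Gamma)$ into itself is exactly what guarantees that products of the bounded extensions agree on $C_{c}(\Gamma)$ with the compositions computed in $\mathcal{E}$.
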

One can define the following sequence of separating (as $L_{\ell}^{k}$'s are densely defined) seminorms on $C^{\ast}_{r}(\Gamma)$:
$$L_{\ell}^k (f) : = \begin{cases} \left \|\Delta^{k} (f) \right \|_{\mathrm {op}} \quad f \in C_c (\Gamma), \\ 0 \quad \quad \hspace{13.2mm} \mathrm {otherwise}.\end{cases}$$
 We have the following corollary of the Lemma \ref{higherleibnitz}:
\begin{cor} \label{LeibnitzL}
For any $k, n \in \mathbb N$ we have \Bea L_{\ell}^{k} \left (f_{1}f_{2}\cdots f_{n} \right ) & \leq & \sum\limits_{r_{1} + r_{2} + \cdots + r_{n} = k} \dbinom {k} {r_{1}\ r_{2}\ \cdots\ r_{n}} \prod\limits_{i = 1}^{n} L_{\ell}^{r_{i}} \left (f_{i} \right ), \Eea where $\dbinom {k} {r_{1}\ r_{2}\ \cdots\ r_{n}} = \dfrac {k!} {r_{1}!\ r_{2}!\ \cdots\ r_{n}!}$ and $f_{i} \in C_{c} (\Gamma)$ for $i = 1, 2, \cdots, n.$
\end{cor}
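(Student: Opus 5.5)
The corollary follows directly from Lemma \ref{higherleibnitz}: take operator norms of the identity there and use the triangle inequality together with submultiplicativity of $\|\cdot\|_{\mathrm{op}}$. Fix $k,n\in\mathbb N$ and $f_1,\dots,f_n\in C_c(\Gamma)$. Since $C_c(\Gamma)$ is closed under convolution, $f_1f_2\cdots f_n\in C_c(\Gamma)$. Hence by the definition of $L^k_\ell$ on $C_c(\Gamma)$,
\begin{displaymath}
L_\ell^k(f_1\cdots f_n)=\left\|\Delta_\ell^k(f_1\cdots f_n)\right\|_{\mathrm{op}}.
\end{displaymath}

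Next I substitute the expansion from Lemma \ref{higherleibnitz}. This writes $\Delta_\ell^k(f_1\cdots f_n)$ as a finite sum, over tuples $(r_1,\dots,r_n)$ with $r_1+\cdots+r_n=k$, of the positive multinomial coefficient times the product of the bounded operators $\Delta_\ell^{r_i}(f_i)$.

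Applying the triangle inequality to this finite sum, and then $\|ST\|_{\mathrm{op}}\le\|S\|_{\mathrm{op}}\|T\|_{\mathrm{op}}$ repeatedly to each product, gives
\begin{displaymath}
L_\ell^k(f_1\cdots f_n)\le\sum_{r_1+\cdots+r_n=k}\binom{k}{r_1\ r_2\ \cdots\ r_n}\prod_{i=1}^n\left\|\Delta_\ell^{r_i}(f_i)\right\|_{\mathrm{op}}.
\end{displaymath}
Each factor $\|\Delta_\ell^{r_i}(f_i)\|_{\mathrm{op}}$ is $L_\ell^{r_i}(f_i)$, since $f_i\in C_c(\Gamma)$.

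The only point needing care is the indices $r_i=0$, which occur in the sum whenever $n\ge 2$. These must be read with the convention $\Delta_\ell^0(f)=\lambda(f)$, as in the Lemma, so that $L_\ell^0(f)=\|f\|_{\mathrm{red}}$. With that convention the identification above holds in every term and the stated inequality follows. No further obstacle is expected.
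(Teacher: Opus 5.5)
Your proposal is correct and is exactly the paper's argument: take operator norms in the identity of Lemma \ref{higherleibnitz}, then apply the triangle inequality and submultiplicativity of $\|\cdot\|_{\mathrm{op}}$. Your remark that terms with $r_i=0$ must be read as $L_\ell^0(f)=\|\lambda(f)\|_{\mathrm{op}}=\|f\|_{\mathrm{red}}$, via the appendix convention $\Delta_\ell^0(f)=\lambda(f)$, is a useful clarification that the paper leaves implicit.
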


\begin{proof}
The proof follows directly from Lemma \ref{higherleibnitz} by taking the operator norm in both sides of the inequality followed by using triangle inequality and sub-multiplicativity of the norm.
\end{proof}
Note that $L^{k}_{\ell}$'s are adjoint invariant and ${\textrm Ker}(L^{k}_{\ell})=\mathbb{C}1_{A}$ for all $k$. It has already been noted that $L^{k}_{\ell}$'s separate points. With these observations, we have the following crucial theorem:
\begin{thm}\label{CQMSmain}(\cite{Christensen}*{Theorem 2.6})
    Let $\Gamma$ be a discrete group with the rapid decay property with decay exponent $r$ and $\ell$ be a proper length function on $\Gamma.$ Then for any $k>r$, $d_{L^{k}_{\ell}}$ (the metric induced by $L_{\ell}^{k}$) metrizes the weak$^{\ast}$-topology of $S(C^{\ast}_{r}(\Gamma))$ and therefore, $(C_{r}^{\ast}(\Gamma),L_{\ell}^{k})$ is a compact quantum metric space.
\end{thm}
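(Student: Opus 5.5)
The plan is to reduce the statement to Rieffel's total boundedness criterion (\cite{Rieffel1}). For a separating, adjoint-invariant seminorm with $\mathrm{Ker}(L)=\mathbb{C}1_A$ and dense domain, $d_L$ metrizes the weak$^\ast$-topology on $S(A)$ if and only if the image of $\{a : L(a)\le 1\}$ in $A/\mathbb{C}1_A$ is totally bounded. An equivalent form is that $\{a : L(a)\le 1,\ \|a\|\le 1\}$ is totally bounded in norm. So I only need to show that, for $k>r$, the set
\[
\mathcal{B}_k=\{f\in C_c(\Gamma): L^k_\ell(f)\le 1\}
\]
is totally bounded modulo scalars. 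I work on the natural domain $C_c(\Gamma)$, where the commutators are defined.

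\textbf{Step 1: an explicit formula for the commutators.} I first compute $\Delta^k_\ell$ on basis vectors. Since $\lambda(\delta_g)\delta_h=\delta_{gh}$, we get
\[
[M_\ell,\lambda(\delta_g)]\delta_h=(\ell(gh)-\ell(h))\delta_{gh},
\]
and by induction
\[
\Delta^k_\ell(f)\delta_h=\sum_g (\ell(gh)-\ell(h))^k f(g)\,\delta_{gh}.
\]
Taking $h=e$ and using $\ell(e)=0$ gives
\[
\Delta^k_\ell(f)\delta_e=\sum_g \ell(g)^k f(g)\,\delta_g .
\]
Hence
\[
\Bigl(\sum_g |f(g)|^2\ell(g)^{2k}\Bigr)^{1/2}\le \|\Delta^k_\ell(f)\|_{\mathrm{op}}=L^k_\ell(f).
\]
This weighted $\ell^2$ estimate is the key input. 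In particular $|f(g)|\le \ell(g)^{-k}$ for $g\ne e$ whenever $f\in\mathcal{B}_k$.

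\textbf{Step 2: head/tail splitting, which is where rapid decay enters.} Fix $f\in\mathcal{B}_k$ and set $f_0=f-f(e)\delta_e$, which represents the same class modulo scalars. For $N\ge1$, write $f_0=f_{\le N}+f_{>N}$ according to whether $\ell(g)\le N$ or $\ell(g)>N$. The rapid decay property, applied to the tail, gives
\[
\|f_{>N}\|_{\mathrm{red}}\le C\Bigl(\sum_{\ell(g)>N}|f(g)|^2(1+\ell(g))^{2r}\Bigr)^{1/2}\le C\sup_{t>N}\frac{(1+t)^r}{t^k}\Bigl(\sum_g|f(g)|^2\ell(g)^{2k}\Bigr)^{1/2}\le C' N^{r-k}.
\]
Since $k>r$, this bound tends to $0$ uniformly over $\mathcal{B}_k$. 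I expect this step to be the main obstacle. The point is to avoid any growth assumption: one must use the $\ell^2$ bound from Step 1 rather than the pointwise bound $|f(g)|\le\ell(g)^{-k}$, because summing the pointwise bound over $\Gamma$ would need a growth estimate that rapid decay does not obviously provide.

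\textbf{Step 3: finite-dimensional head and conclusion.} Because $\ell$ is proper, the ball $B_N=\{g:\ell(g)\le N\}$ is finite. The heads $f_{\le N}$ therefore lie in the finite-dimensional space spanned by $\{\delta_g : g\in B_N\}$, with coefficients bounded by $\ell(g)^{-k}$; by Step 1 only $g\ne e$ occurs, so each such $g$ satisfies $\ell(g)>0$. The set of heads is thus a bounded subset of a finite-dimensional normed space, hence totally bounded. Now, given $\varepsilon>0$, choose $N$ with $C'N^{r-k}<\varepsilon/2$ and take an $\varepsilon/2$-net for the heads. This gives an $\varepsilon$-net for $\{f_0: f\in\mathcal{B}_k\}$ in $\|\cdot\|_{\mathrm{red}}$, so the image of $\mathcal{B}_k$ in $C^\ast_r(\Gamma)/\mathbb{C}1$ is totally bounded. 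Rieffel's criterion then shows that $d_{L^k_\ell}$ metrizes the weak$^\ast$-topology. Together with the already noted facts (adjoint invariance, kernel $\mathbb{C}1$, separation), this shows that $(C^\ast_r(\Gamma),L^k_\ell)$ is a compact quantum metric space.
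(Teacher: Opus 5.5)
Your argument is correct and is essentially the proof from \cite{Christensen} that the paper relies on. Step 1 is exactly the lower bound \eqref{Lbound}, Step 2 is exactly the rapid-decay tail estimate \eqref{eqn 2.0}, and properness of $\ell$ together with Rieffel's total-boundedness criterion in the quotient $C^{\ast}_{r}(\Gamma)/\mathbb{C}1$ finishes it. Two small remarks. Your tacit use of $\ell(g)>0$ for $g\neq e$ is precisely the standing condition $\mathrm{Ker}(L^{k}_{\ell})=\mathbb{C}1$. Your aside that total boundedness of $\{a: L(a)\le 1,\ \|a\|\le 1\}$ is an equivalent criterion is only true together with boundedness of $d_{L}$, but you never actually use it.
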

We shall use two crucial estimates in this paper which we are going to recall now. The reader is referred to \cite{Christensen}:\\
As before, let $\Gamma$ be a finitely generated discrete group with the word length function such that $\Gamma$ has the rapid decay property with decay exponent $r$. Fix a natural number $k>r$. 
Let $f=\sum\limits_{g\in F} a_{g}\delta_{g}\in C_{c}(\Gamma)$. Then using the proof of Theorem \ref{CQMSmain} we have
\begin{equation} \label{eqn 2.0}
\left \|\sum\limits_{\ell (g) \geq n} a_{g} \delta_{g} \right \|_{\mathrm {red}} \leq C 2^{r} n^{r - k}, 
\end{equation} for all natural number $n \in \mathbb N,$ where $C > 0$ is the constant of rapid decay;
\begin{equation} \label{eqn 2.1}
L_{\ell}^{k} (f) \leq \left \|M_{\ell}^{k} (f) \right \|_{1} = \sum\limits_{g \in G} \ell (g)^{k} \left \lvert a_{g} \right \rvert,
\end{equation} and
\begin{equation}\label{Lbound}L^{k}_{\ell}(f)\geq \left \|\Delta^{k}(f)(\delta_{e}) \right \|_{2}= \left (\sum_{g\in G}\ell(g)^{2k}\lvert a_{g}\rvert^{2}\right)^{\frac{1}{2}},\end{equation}
where $\| \cdot \|_{2}$ is the $\ell^{2}$-norm in $\ell^{2}(\Gamma)$.
\begin{lem}\label{deltag}
Let $\Gamma$ be a discrete group and $\ell$ be length function on $\Gamma.$ Then for any $k \in \mathbb N$ and for all $g \in \Gamma$ $$L_{\ell}^{k} \left (\delta_{g} \right ) = \left (\ell (g) \right )^{k}.$$
\end{lem}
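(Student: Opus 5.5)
We compute $\Delta_{\ell}^{k}(\delta_{g})$ explicitly on the canonical basis of $\ell^{2}(\Gamma)$ and then take its operator norm. Since $\lambda(\delta_{g})\delta_{h}=\delta_{gh}$ and $M_{\ell}\delta_{h}=\ell(h)\delta_{h}$, we have
\begin{displaymath}
\Delta_{\ell}^{1}(\delta_{g})\delta_{h}=[M_{\ell},\lambda(\delta_{g})]\delta_{h}=\bigl(\ell(gh)-\ell(h)\bigr)\delta_{gh}.
\end{displaymath}
An induction on $k$ using $\Delta^{k}_{\ell}(\delta_g)=[M_{\ell},\Delta^{k-1}_{\ell}(\delta_g)]$ then gives
\begin{displaymath}
\Delta_{\ell}^{k}(\delta_{g})\delta_{h}=\bigl(\ell(gh)-\ell(h)\bigr)^{k}\delta_{gh}.
\end{displaymath}
The inductive step is the same computation: applying $M_{\ell}$ before and after the operator multiplies by $\ell(gh)$ and $\ell(h)$ respectively. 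Thus $\Delta_{\ell}^{k}(\delta_{g})=\lambda(\delta_{g})D_{k}$, where $D_{k}$ is the diagonal operator $\delta_{h}\mapsto(\ell(gh)-\ell(h))^{k}\delta_{h}$. This operator is bounded, which is consistent with the boundedness already recalled in the paper.

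Because $\lambda(\delta_{g})$ is unitary and $D_{k}$ is diagonal, $L_{\ell}^{k}(\delta_{g})=\|D_{k}\|_{\mathrm{op}}=\sup_{h\in\Gamma}\lvert\ell(gh)-\ell(h)\rvert^{k}$. For the upper bound we use the triangle inequality of the length function, $\ell(gh)\le\ell(g)+\ell(h)$, together with $\ell(h)=\ell(g^{-1}gh)\le\ell(g^{-1})+\ell(gh)=\ell(g)+\ell(gh)$. Together these give $\lvert\ell(gh)-\ell(h)\rvert\le\ell(g)$ for every $h$. For the lower bound we take $h=e$, which gives $\lvert\ell(g)-\ell(e)\rvert^{k}=\ell(g)^{k}$ since $\ell(e)=0$. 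Alternatively, the lower bound is immediate from \eqref{Lbound}, which gives $L^{k}_{\ell}(\delta_{g})\ge\ell(g)^{k}$, and the upper bound also follows directly from \eqref{eqn 2.1}. Hence $L_{\ell}^{k}(\delta_{g})=\ell(g)^{k}$.

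Nothing here is a real obstacle. The only care needed is the bookkeeping in the inductive formula for $\Delta^{k}_{\ell}(\delta_g)$, and the use of the standard length-function axioms $\ell(e)=0$, $\ell(g^{-1})=\ell(g)$ and subadditivity. In fact the cleanest write-up is simply to sandwich $L^k_\ell(\delta_g)$ between \eqref{Lbound} and \eqref{eqn 2.1}, both of which evaluate to $\ell(g)^k$ for $f=\delta_g$.
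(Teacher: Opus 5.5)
Your proof is correct. Your closing remark, sandwiching $L^{k}_{\ell}(\delta_g)$ between \eqref{Lbound} and \eqref{eqn 2.1}, is exactly the paper's one-line proof. Your main argument takes a more hands-on route. The explicit formula $\Delta^{k}_{\ell}(\delta_g)=\lambda(\delta_g)D_k$ gives the norm exactly as $\sup_{h}\lvert\ell(gh)-\ell(h)\rvert^{k}$, and you evaluate this supremum using only $\ell(e)=0$, symmetry and subadditivity. This buys you self-containedness and full generality. The paper records the two estimates it cites (from Christensen--Antonescu) under the standing assumptions of a finitely generated group with rapid decay, a word length function and $k>r$. The lemma, however, is stated for an arbitrary discrete group, an arbitrary length function and every $k\in\mathbb{N}$. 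Your computation is the mechanism behind both estimates in the case $f=\delta_g$, and it needs none of those hypotheses. Taking $h=e$ gives \eqref{Lbound}, and the bound $\lvert\ell(gh)-\ell(h)\rvert\le\ell(g)$ together with the triangle inequality gives \eqref{eqn 2.1}. The paper's proof is shorter only because it outsources this computation.
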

\begin{proof}
The proof directly follows from the inequalities \eqref {eqn 2.1} and \eqref {Lbound}.
\end{proof}
\subsection{Metric dimension and the product entropy} We start this subsection by recalling a few notations and definitions from Section 3 of \cite{Kerr}. For a normed linear space $(X,\| \cdot \|)$ (either a $C^{\ast}$-algebra or a Hilbert space for us), $\mathcal{F}(X)$ will denote the collection of all finite dimensional subspaces of $X$. If $Y,Z$ are subsets of $X$, then for $\delta>0$, the notation $Y\subseteq_{\delta} Z$ will mean that for every $y\in Y$, there is some $z\in Z$ such that $\lvert\lvert y-z\rvert\rvert<\delta$. For any subset $Y\subset X$, $D(Y,\delta)=\inf\{{\textrm{dim}}(Z):Z\in\mathcal{F}(X), Y\subseteq_{\delta}Z\}$, where $\textrm{dim}(Z)$ is the vector space dimension of $Z$. Then it is easy to see that if $Y_{1}\subseteq Y_{2}$, $D(Y_{1},\delta)\leq D(Y_{2},\delta)$ for any $\delta>0$. Now let $(A,L)$ be a compact quantum metric space on a unital $C^{\ast}$-algebra in the sense of Definition \ref{CQMSdefn}. Recall the notation $\mathcal{L}_{r}$. We have the following lemma from \cite{Kerr} :
\begin{lem} (\cite{Kerr}*{Proposition 3.2})
    $D(\mathcal{L}_{1},\delta)$ is finite for all $\delta>0$.
\end{lem}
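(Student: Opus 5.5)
The plan is to use the defining property of a compact quantum metric space, namely that $d_L$ metrizes the weak$^\ast$-topology on $S(A)$. By Rieffel's characterization (\cite{Rieffel1}), this is equivalent to the following: the image of $\mathcal{L}_1$ in the quotient $A/\mathbb{C}1_A$ is totally bounded. Equivalently, there is some $R>0$ such that, for a fixed state $\mu$, the set
\[
B:=\{a\in\mathcal{L}_1 : \mu(a)=0,\ a=a^\ast\}
\]
is norm-bounded by $R$ and totally bounded in $A$. Since $L$ may be $+\infty$ outside its domain and $\mathcal{L}$ is assumed dense, I would first recall Rieffel's theorem in the form that applies to such seminorms. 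It needs only a separating seminorm with kernel $\mathbb{C}1_A$, and it gives total boundedness of $\{a\in\mathcal{L}_1:\|a\|\le R\}$, or equivalently of $\mathcal{L}_1$ modulo scalars.

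First step: reduce from arbitrary elements to a totally bounded set. Any $a\in\mathcal{L}_1$ can be written as $a=\mathrm{Re}\,a+i\,\mathrm{Im}\,a$, and both parts lie in $\mathcal{L}_1$ up to a factor, since $L(a^\ast)=L(a)$. Each self-adjoint part $b$ can in turn be written as $b=(b-\mu(b)1_A)+\mu(b)1_A$, where $L(b-\mu(b)1_A)=L(b)<1$. Hence
\[
\mathcal{L}_1\subseteq \mathbb{C}1_A + B + iB.
\]

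Second step: cover by a finite-dimensional subspace. Given $\delta>0$, choose a finite $\delta/2$-net $\{b_1,\dots,b_m\}$ of $B$, which exists by total boundedness. Set $Z=\mathrm{span}\{1_A,b_1,\dots,b_m\}$, a subspace of dimension at most $m+1$. For $a=c1_A+x+iy$ with $x,y\in B$, pick $b_j,b_k$ within $\delta/2$ of $x$ and $y$ respectively. Then $z=c1_A+b_j+ib_k\in Z$ satisfies $\|a-z\|<\delta$. So $\mathcal{L}_1\subseteq_\delta Z$, and therefore $D(\mathcal{L}_1,\delta)\le m+1<\infty$.

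The main obstacle is justifying the total boundedness of $B$ from the metrization hypothesis. This is the nontrivial direction of Rieffel's theorem, proved via compactness of $S(A)$ together with an Arzelà–Ascoli-type argument on the affine functions $\hat a(\phi)=\phi(a)$ restricted to $S(A)$. These functions are $1$-Lipschitz for $d_L$, vanish at $\mu$, and are bounded by $\mathrm{diam}(S(A),d_L)<\infty$. For self-adjoint $a$, the sup norm of $\hat a$ on $S(A)$ equals $\|a\|$. So Arzelà–Ascoli on the compact metric space $(S(A),d_L)$ gives total boundedness of $B$ in norm. I would simply invoke this as known (it is exactly the content behind Kerr's Proposition 3.2).
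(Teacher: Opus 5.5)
Your argument is correct and is essentially the standard proof behind the cited result of \cite{Kerr}; the paper itself only cites it. In both, total boundedness of $\mathcal{L}_1$ modulo $\mathbb{C}1_A$ (obtained from compactness of $(S(A),d_L)$ via Arzel\`a--Ascoli) yields a finite net whose span together with $1_A$ $\delta$-contains $\mathcal{L}_1$, and your passage to the self-adjoint, $\mu$-normalized set $B$ is just a concrete way of working in $A/\mathbb{C}1_A$ (in fact $\mathrm{Re}\,a,\mathrm{Im}\,a\in\mathcal{L}_1$ with no extra factor, since $L(a^\ast)=L(a)$).
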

We record the following theorem due to Voiculescu which is extremely important to get lower bounds of metric dimension and entropy.
\begin{thm}\label{Voiculescu}(\cite{Voiculescu}*{Lemma 7.8}) Let $X$ be an orthonormal set in a Hilbert space $H$. Then for any $\delta>0$, $D(X,\delta)\geq(1-\delta^{2}){\mathrm {Card}}(X)$. 
\end{thm}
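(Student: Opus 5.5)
The plan is to bound the dimension of any approximating subspace by a trace computation with orthogonal projections. Fix $\delta>0$ and let $Z\in\mathcal{F}(H)$ be any finite dimensional subspace with $X\subseteq_{\delta}Z$; put $m=\dim(Z)$. It suffices to show $m\geq(1-\delta^{2})\,\mathrm{Card}(X_{0})$ for every finite subset $X_{0}\subseteq X$. Taking the infimum over $Z$ then gives $D(X,\delta)\geq(1-\delta^{2})\mathrm{Card}(X)$. If $X$ is infinite and $\delta<1$, this forces $D(X,\delta)=+\infty$. If $\delta\geq 1$ the right-hand side is $\leq 0$ and there is nothing to prove.

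Let $P$ be the orthogonal projection of $H$ onto $Z$, which exists since $Z$ is finite dimensional and hence closed. For $x\in X_{0}$ choose $z\in Z$ with $\|x-z\|<\delta$. Since $Px$ is the closest point of $Z$ to $x$, we get $\|x-Px\|\leq\|x-z\|<\delta$. By Pythagoras and $\|x\|=1$,
\begin{displaymath}
\langle Px,x\rangle=\|Px\|^{2}=1-\|x-Px\|^{2}>1-\delta^{2}.
\end{displaymath}
Summing over $x\in X_{0}$ gives $\sum_{x\in X_{0}}\langle Px,x\rangle>(1-\delta^{2})\,\mathrm{Card}(X_{0})$.

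The remaining step is the upper bound $\sum_{x\in X_{0}}\langle Px,x\rangle\leq m$. Extend $X_{0}$ to an orthonormal basis $\{e_{i}\}$ of $H$. Since $P$ is a positive finite rank operator, its trace is
\begin{displaymath}
\mathrm{Tr}(P)=\sum_{i}\langle Pe_{i},e_{i}\rangle=\dim(Z)=m,
\end{displaymath}
and this value is independent of the chosen basis. All terms $\langle Pe_{i},e_{i}\rangle$ are nonnegative, so the partial sum over $X_{0}$ is at most $m$. Combining the two estimates gives $m>(1-\delta^{2})\mathrm{Card}(X_{0})$, as required.

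No step here is a real obstacle. The only points needing care are the basis independence of the trace of a positive finite rank operator, and passing to finite subsets so that infinite orthonormal sets are handled.
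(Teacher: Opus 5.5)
Your proof is correct. The paper gives no proof of its own and simply cites Voiculescu's Lemma 7.8, whose argument is this same comparison of $\sum_{x\in X_0}\|Px\|^{2}>(1-\delta^{2})\,\mathrm{Card}(X_0)$ with $\mathrm{Tr}(P)=\dim(Z)$. Your reduction to finite subsets $X_0$ is a harmless addition that covers infinite orthonormal sets, and you could avoid extending $X_0$ to a basis of $H$ by applying Bessel's inequality to an orthonormal basis of $Z$.
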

We will be interested in $C^{\ast}$-algebras. The above theorem connects with the $C^{\ast}$-algebraic approximations due to the result that we are going to state below:\\
Let $A$ be a unital $C^{\ast}$-algebra. For any state $\sigma\in S(A)$ (not necessarily faithful), denote the GNS space by $H_{\sigma}$, the corresponding cyclic vector by $\xi_{\sigma}$ and the representation by $\pi_{\sigma}$. Let $\Omega$ be a finite subset of $A$ such that $\Omega_{\sigma}:=\{\pi_{\sigma}(a)\xi_{\sigma}:a\in\Omega\}$ is an orthonormal set in $H_{\sigma}$. Then we have the following lemma:
\begin{lem}\label{VoiculescuCalg} (\cite{Kerr}*{Lemma 5.13})
 For any $\delta>0$, $D(\Omega,\delta)\geq D(\Omega_{\sigma},\delta)$.  
\end{lem}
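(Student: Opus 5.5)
The plan is to compare the two sides through one bounded linear map, the GNS vector map. Let $V:A\to H_{\sigma}$ be given by $V(a)=\pi_{\sigma}(a)\xi_{\sigma}$. This map is linear, and it is contractive because
\[
\|\pi_{\sigma}(a)\xi_{\sigma}\|^{2}=\sigma(a^{\ast}a)\leq\|a\|^{2},
\]
since $\sigma$ is a state and $\|\xi_{\sigma}\|=1$. By construction $V(\Omega)=\Omega_{\sigma}$. The claim then reduces to a general fact: the quantity $D(\cdot,\delta)$ cannot increase under a linear contraction.

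To prove that fact, fix $\delta>0$. By Lemma 3.2 (or trivially, since $\Omega$ is finite and we may take $Z=\mathrm{span}\,\Omega$), $D(\Omega,\delta)$ is finite. Choose $Z\in\mathcal{F}(A)$ with $\Omega\subseteq_{\delta}Z$ and $\dim Z=D(\Omega,\delta)$; the infimum is attained because it is an infimum of nonnegative integers. Then set $W=V(Z)$. This is a finite dimensional subspace of $H_{\sigma}$, so $W\in\mathcal{F}(H_{\sigma})$, and $\dim W\leq\dim Z$.

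Next, take any element $\pi_{\sigma}(a)\xi_{\sigma}\in\Omega_{\sigma}$ with $a\in\Omega$. Pick $z\in Z$ with $\|a-z\|<\delta$. Then
\[
\|\pi_{\sigma}(a)\xi_{\sigma}-V(z)\|=\|V(a-z)\|\leq\|a-z\|<\delta,
\]
so $\Omega_{\sigma}\subseteq_{\delta}W$. Hence $D(\Omega_{\sigma},\delta)\leq\dim W\leq\dim Z=D(\Omega,\delta)$.

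There is no real obstacle in this argument. The only points to check are the contractivity of $V$, which needs only the inequality $\sigma(a^{\ast}a)\leq\|a\|^{2}$, and that the strict inequality $<\delta$ survives passing through $V$, which it does. The orthonormality hypothesis on $\Omega_{\sigma}$ is not needed for this inequality; it matters only when the lemma is later combined with Theorem \ref{Voiculescu}.
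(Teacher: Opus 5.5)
Your proof is correct and takes essentially the same approach as the paper: the paper cites this lemma from Kerr without proof, but it carries out the same argument (push an approximating subspace $Z$ forward under the contraction $a\mapsto\pi_{\sigma}(a)\xi_{\sigma}$, using $\|\pi_{\sigma}(a)\xi_{\sigma}\|\leq\|a\|$) for the trace state and $\mathcal{L}_1$ in the proof of Theorem \ref{finiteMdim}. You are also right that the orthonormality hypothesis plays no role in this inequality and only matters when the lemma is combined with Theorem \ref{Voiculescu}.
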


\begin{rem}
Recall that the left regular representation $(\ell^2(\Gamma),\lambda,\delta_e)$ is precisely the GNS representation with respect to the canonical tracial state on $C_r^*(\Gamma)$ with cyclic vector $\delta_e.$ Therefore, by Lemma \ref{VoiculescuCalg}, for every finite subset $\Omega \subseteq C_r^*(\Gamma)$ and every $\delta>0$,
\[
D\bigl(\lambda(\Omega)\delta_e,\delta\bigr)\leq D(\Omega,\delta).
\]
We shall use this observation frequently in what follows.
\end{rem}

Now we are ready to recall the definition of the metric dimension of a compact quantum metric space $(A,L)$ on a unital $C^{\ast}$-algebra $A$.
\begin{defn}
    The metric dimension of a CQMS $(A,L)$ is defined to be
    \begin{displaymath}
        \mathrm{Mdim}_{L}(A):=\limsup\limits_{\delta\to 0^{+}} \frac{\log \ D(\mathcal{L}_{1},\delta)}{\log \ \delta^{-1}}
    \end{displaymath}
\end{defn}
Now we shall define the product entropy of an automorphism $\alpha$ of a group $C^{\ast}$-algebra $C^{\ast}_{r}(\Gamma)$ for a finitely generated discrete group $\Gamma$. To that end, let $Pf(X)$ denote the collection of all finite subsets of a set $X$. For any subsets $X_{1},X_{2},\ldots,X_{n}$, $\prod\limits_{i=1}^{n} X_{i}$ will denote the set $\{a_{1}a_{2}\ldots a_{n}\in A:a_{i}\in X_{i}\}$. We denote the unit ball of a $C^{\ast}$-algebra $A$ by $A_{1}$. For any subspace $\mathcal{A}\subset A$, we denote the set $\mathcal{A}\cap A_{1}$ by $\mathcal{A}_1$.
\begin{defn}
    Let $\Gamma$ be a finitely generated discrete group with an automorphism $\alpha$ viewed as an automorphism on $C^{\ast}_{r}(\Gamma)$. For $\Omega\in Pf(C_{c}(\Gamma)_{1})$ and $\delta>0$, define
    \begin{align*}
        &&\mathrm{Entp} (\alpha,\Omega,\delta) & =  \limsup\limits_{n\to\infty}\frac{1}{n}\log \ D \left (\Big(\prod_{i=0}^{n-1}\alpha^{i}(\Omega)\Big),\delta \right ),\\
        &&\mathrm{Entp}(\alpha,\Omega) & = \sup_{\delta>0}\mathrm{Entp}(\alpha,\Omega,\delta),\\
        &&\mathrm{Entp}(\alpha) & =  \sup_{\Omega\in Pf(C_{c}(\Gamma)_1)}\mathrm{Entp}(\alpha,\Omega).
    \end{align*}
\end{defn}
\begin{rem}\label{Kerrcomparison}
    Note that our definition of product entropy agrees with that given in \cite{Kerr}(Definition 5.2) for the CQMS structure on $C^{\ast}_{r}(\Gamma)$ given by $L_{\ell}^{k}$ for any $k$ i.e. for any automorphism $\alpha$, 
    \begin{displaymath}
        \mathrm{Entp}(\alpha)=\mathrm{Entp}_{L^{k}_{\ell}}(\alpha)
    \end{displaymath}
    for any appropriate $k$, where $\mathrm{Entp}_{L_{\ell}^k}$ stands for the product entropy as defined by Kerr for the CQMS structure $(C^{\ast}_{r}(\Gamma),L_{\ell}^{k})$. 
\end{rem}
\section{Main results}
Let $\Gamma$ be a finitely generated discrete group. Note that for meaningful discussion on metric dimension and entropy on $C^{\ast}_{r}(\Gamma)$, $\Gamma$ has to be an infinite group. We denote the word length function on $\Gamma$ by $\ell$. We also assume that $\Gamma$ has the rapid decay property with some exponent $r$ so that $(C^{\ast}_{r}(\Gamma),L^{k}_{\ell})$ is a CQMS for all $k>r$ (Theorem \ref{CQMSmain}). 


\vspace{2mm}
Throughout the remainder of this paper, we work exclusively with finitely generated discrete groups. Unless explicitly specified otherwise, the length function to be considered always refers to the word length function with respect to a fixed generating set of the underlying group.

\vspace{2mm}

Now we prove that if a group $\Gamma$ has polynomial growth, then $C^{\ast}_{r}(\Gamma)$ has finite metric dimension.
\begin{thm}\label{finiteMdim} Let $\Gamma$ be a finitely generated discrete group with polynomial growth of exponent $r$ equipped with the word length function $\ell$ relative to a finite generating set $S.$ Then for any $k>\frac {r} {2},$ $(C^{\ast}_{r}(\Gamma),L^{k}_{\ell})$ is a CQMS and
\begin{displaymath}
    \frac {1} {k} \leq \mathrm {Mdim}_{L_{\ell}^{k}} \left (C_{r}^{\ast} (\Gamma) \right ) \leq \frac {2 r} {2 k - r}.
\end{displaymath}    
\end{thm}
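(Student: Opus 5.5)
The proof splits into three parts: the CQMS claim, the upper bound, and the lower bound. Throughout, write $B_n=\{g\in\Gamma:\ell(g)\le n\}$, so that $\lvert B_n\rvert\le C(1+n)^r$.

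\emph{CQMS part.} First I show that polynomial growth of exponent $r$ gives the rapid decay property with any decay exponent $s>r/2$. For $f\in C_c(\Gamma)$, Cauchy--Schwarz gives
$$\|f\|_{\mathrm{red}}\le\|f\|_1\le\Big(\sum_g\lvert f(g)\rvert^2(1+\ell(g))^{2s}\Big)^{1/2}\Big(\sum_g(1+\ell(g))^{-2s}\Big)^{1/2}.$$
The last sum is finite when $2s>r$. To see this, group the terms by spheres and use $\lvert B_m\rvert\le C(1+m)^r$ with summation by parts, which bounds it by a convergent $\sum_m (1+m)^{r-1-2s}$. Given $k>r/2$, I pick $s$ with $r/2<s<k$ and apply Theorem \ref{CQMSmain}.

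\emph{Upper bound.} Take $f=\sum a_g\delta_g\in\mathcal L_1$ for $L=L^k_\ell$. By \eqref{Lbound}, $\sum_g\ell(g)^{2k}\lvert a_g\rvert^2<1$. Split $f$ into its part supported on $\{\ell(g)<n\}$ and its tail. For the tail, use the same Cauchy--Schwarz argument as above:
$$\Big\|\sum_{\ell(g)\ge n}a_g\delta_g\Big\|_{\mathrm{red}}\le\Big(\sum_{\ell(g)\ge n}\ell(g)^{-2k}\Big)^{1/2}\le C' n^{(r-2k)/2}.$$
The shell summation, this time over $m\ge n$, is the step needing care: it must be done so that the constant is uniform in $n$, and only polynomial growth is used. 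Hence $\mathcal L_1\subseteq_\delta \mathrm{span}\{\delta_g:\ell(g)<n\}$ as soon as $C'n^{(r-2k)/2}<\delta$. This subspace has dimension at most $C(1+n)^r$, and it contains the scalars since $e\in B_n$. I choose $n\asymp\delta^{-2/(2k-r)}$. Then $D(\mathcal L_1,\delta)\le C''\delta^{-2r/(2k-r)}$, and dividing by $\log\delta^{-1}$ and taking $\limsup$ gives the upper bound $\frac{2r}{2k-r}$.

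\emph{Lower bound.} By Lemma \ref{deltag}, $\varepsilon\delta_g\in\mathcal L_1$ whenever $\varepsilon\,\ell(g)^k<1$. Fix $\delta$, put $\varepsilon=2\delta$ and $m=\lfloor(2\delta)^{-1/k}\rfloor-1$, and let $\Omega=\{\varepsilon\delta_g: g\in B_m\}\subseteq\mathcal L_1$. Apply Lemma \ref{VoiculescuCalg} with the canonical trace, as in the Remark following it. The vectors $\lambda(\delta_g)\delta_e=\delta_g$ are orthonormal, and rescaling gives $D(\varepsilon X,\delta)=D(X,\delta/\varepsilon)$. Theorem \ref{Voiculescu} then yields
$$D(\mathcal L_1,\delta)\ge D(\Omega,\delta)\ge\tfrac34\lvert B_m\rvert.$$
Since $\Gamma$ is infinite and finitely generated, $\lvert B_m\rvert\ge m+1\gtrsim\delta^{-1/k}$; taking logarithms and the $\limsup$ gives the lower bound $\frac1k$.

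I expect the main obstacle to be the shell summation estimate $\sum_{\ell(g)\ge n}\ell(g)^{-2k}\lesssim n^{r-2k}$, which drives both the rapid decay exponent and the tail bound. A smaller issue is that the seminorm is effectively $+\infty$ outside $C_c(\Gamma)$. I will treat $\mathcal L_1$ as contained in $C_c(\Gamma)$, which is the intended domain.
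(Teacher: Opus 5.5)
Your proposal is correct and follows essentially the paper's route: rapid decay with any exponent $s>r/2$ plus Theorem \ref{CQMSmain} for the CQMS claim (you prove the rapid decay step by hand where the paper cites Nica), truncation of $\mathcal L_1$ to $\mathrm{span}\{\delta_g:\ell(g)<n\}$ for the upper bound, and scaled point masses on a ball combined with Voiculescu's lemma via the trace and linear growth of balls for the lower bound. The one real difference is the tail estimate: you bound the tail by $\|\cdot\|_1$ and Cauchy--Schwarz against $\sum_{\ell(g)\ge n}\ell(g)^{-2k}\lesssim n^{r-2k}$, which gives the sharp rate $n^{r/2-k}$ and hence $\frac{2r}{2k-r}$ at once, whereas the paper applies the rapid decay inequality \eqref{eqn 2.0} with an exponent $p\in(r/2,k)$, obtains $\frac{r}{k-p}$, and then lets $p\to r/2^{+}$.
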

\begin{proof}
 First of all it follows from Theorem \ref{CQMSmain} and \cite{nica2010degree}*{Proposition 3.2} that $\left (C_r^{\ast} (\Gamma), L_{\ell}^{k} \right )$ is a CQMS for $k > \frac {r} {2}.$ Now consider the set $\mathcal L_{1} : = \left \{f \in C_{c} (\Gamma)\ :\ L_{\ell}^{k} (f) \leq 1 \right \}.$ Let us fix some $\varepsilon > 0.$ Then for any $\delta > 0$ there exists $X \in \mathcal {F} \left (C_r^{\ast} (\Gamma) \right ),$ a finite dimensional subspace with $\mathcal L_{1} \subseteq_{\delta} X,$ i.e., $\mathcal L_{1}$ is approximately contained in $X$ within $\delta,$ such that $$\dim (X) \leq D \left (\mathcal L_{1}, \delta \right ) + \varepsilon.$$ Let $X : = \mathrm {span} \left \{x_{1}, x_{2}, \cdots, x_{n} \right \}.$ Define a finite dimensional subspace $Y \in \mathcal F \left (\ell^{2} (\Gamma) \right )$ by $Y : = \mathrm {span} \left \{\lambda \left (x_{1} \right ) \delta_{e}, \lambda \left (x_{2} \right ) \delta_{e}, \cdots, \lambda \left (x_{n} \right ) \delta_{e} \right \},$ where $\lambda$ is the left regular representation of $C_{r}^{\ast} (\Gamma)$ on $\ell^{2} (\Gamma)$ and $e$ is the identity element of $\Gamma$ so that $\delta_{e}$ is the cyclic vector for the representation. Now given any $f \in \mathcal L_{1}$ get hold of $x \in X$ such that $\|f - x\|_{\mathrm {red}} < \delta.$ Then we have $$\left \|\lambda (f) \delta_{e} - \lambda (x) \delta_{e} \right \|_{\ell^{2} (G)} \leq \left \|\lambda (f) - \lambda (x) \right \|_{\mathrm {op}} = \left \|\lambda (f - x) \right \|_{\mathrm {op}} = \left \|f - x \right \|_{\mathrm {red}} < \delta.$$ But this shows that $\lambda(\mathcal L_{1})\delta_{e} \subseteq_{\delta} Y$ in $\ell^{2} (\Gamma).$ Therefore, $$D \left (\lambda(\mathcal L_{1})\delta_{e}, \delta \right ) \leq \dim (Y) \leq n = \dim (X) \leq D \left (\mathcal L_{1}, \delta \right ) + \varepsilon.$$ Letting $\varepsilon \rightarrow 0^{+},$ it follows that $$D \left (\lambda(\mathcal L_{1})\delta_{e}, \delta \right ) \leq D \left (\mathcal L_{1}, \delta \right ).$$
For any $n \in \mathbb N,$ let $\gamma_S (n) : = \left \lvert \{g \in G\ :\ell (g) \leq n \} \right \rvert.$ Then there exists $C > 0$ and $n_0 \in \mathbb N$ such that $$\gamma_S (n) \geq C n,$$ for all $n \geq n_0$ \cite{Dikranjan2013TopologicalEA}*{Fact 5.3.3.}. Fix any $0 < C^{\prime} < C$ and choose $\delta_0 > 0$ in such a way that $\left (\delta_{0}^{-1} \right )^{\frac {1} {k}} > \max \left \{n_0, \frac {C} {C - C^{\prime}} \right \}.$
For $0 < \delta \leq \delta_0,$ let us consider the set $$U_{\delta} : = \left \{\delta_{g} \ :\ \ell (g) \leq \left (\delta^{-1} \right )^{\frac {1} {k}} \right \}.$$ 
Since $L_{\ell}^{k} \left (\delta_{g} \right ) = \ell (g)^{k}$ for all $g \in \Gamma,$ it follows that $\delta U_{\delta} \subseteq \mathcal L_{1}.$ 
Note that $\lambda \left (U_{\delta} \right ) \delta_e$ is an orthonormal set in $\ell^2 (\Gamma).$ So by Voiculescu's theorem it follows that 
\Bea 
D \left (\lambda(U_{\delta})\delta_{e}, 2^{-1} \right ) 
& \geq & \frac {3} {4} \left \lvert U_{\delta} \right \rvert \\ 
& \geq & \frac {3} {4} \gamma_S \left (\left \lfloor \left (\delta^{-1} \right )^{\frac {1} {k}} \right \rfloor \right ) \\ 
& \geq & \frac {3} {4} C \left \lfloor \left (\delta^{-1} \right )^{\frac {1} {k}} \right \rfloor \\ & \geq & \frac {3} {4} C \left (\left (\delta^{-1} \right )^{\frac {1} {k}} - 1 \right ) \\ & \geq & C_0 \left (\delta^{-1} \right )^{\frac {1} {k}}, 
\Eea for all $0 < \delta \leq \delta_0,$ where $C_{0} = \frac {3} {4} C^{\prime} > 0.$ Now we have
\Bea \mathrm {Mdim}_{L_{\ell}^{k}} \left (C_{r}^{\ast} (\Gamma) \right ) & = & \limsup\limits_{\delta \to 0^{+}} \frac {\log D \left (\mathcal L_{1}, 2^{-1} \delta \right )} {\log 2 \delta^{-1}} \\ & \geq & \limsup\limits_{\delta \to 0^{+}} \frac {\log D \left (\lambda(\mathcal L_{1})\delta_{e}, 2^{-1} \delta \right )} {\log 2 \delta^{-1}} \\ & \geq & \limsup\limits_{\delta \to 0^{+}} \frac {\log D \left (\lambda(\delta U_{\delta})\delta_{e}, 2^{-1} \delta \right )} {\log 2 \delta^{-1}} \\ & = & \limsup\limits_{\delta \to 0^{+}} \frac {\log D \left (\lambda(U_{\delta})\delta_{e}, 2^{-1} \right )} {\log 2 \delta^{-1}} \\ 
& \geq & \limsup\limits_{\delta \to 0^{+}} \frac {\log (C_{0} \left (\delta^{-1}\right )^{\frac {1} {k}})} {\log \delta^{-1}} \\
& = & \frac {1} {k}. \Eea
For the upper bound, fix some $\frac {r} {2} < p < k.$ Then by \cite{nica2010degree}*{Proposition 2.2} it follows that $\Gamma$ has the property of rapid decay with exponent $p$ with some constant $C' > 0.$ Get hold of $n \in \mathbb N$ such that 
$$\frac {2^{p} C'} {n^{k - p}} < \delta.$$ The smallest such $n \in \mathbb N$ satisfying the inequality is given by $$n_{0} = \left \lceil \left (\frac {2^{p} C'} {\delta} \right )^{\frac {1} {k - p}} \right \rceil.$$ Then by the virtue of the inequality \eqref{eqn 2.0} we have $$\mathcal L_{1} \subseteq_{\delta} B_{n_{0}} : = \mathrm {span} \left \{\delta_{g}\ :\ \ell (g) \leq n_{0} \right \}.$$ Therefore by invoking the property of polynomial growth of degree $r \geq 1$ for the length function $\ell$, we have 
$$D \left (\mathcal L_{1}, \delta \right ) \leq \left \lvert \left \{g \in \Gamma\ : \ell (g) \leq n_{0} \right \} \right \rvert \leq \mu \left (1 + n_{0} \right )^{r},$$ for some $\mu > 0.$
Then \Bea \log D \left (\mathcal L_{1}, \delta \right ) & \leq & \log \mu + r \log \left (1 + n_{0} \right ) \\ & \leq & \log \mu + r \log \left (2 + \left (\frac {2^{p} C'} {\delta} \right )^{\frac {1} {k - p}} \right ) \\ & = & \log \mu + r \log \left (2 \delta^{\frac {1} {k - p}} + \left (2^{p} C' \right )^{\frac {1} {k - p}} \right ) + \frac {r} {k - p} \log \delta^{-1} \Eea 
Letting $\delta \to 0^{+}$ we have $$\frac {\log \mu} {\log \delta^{-1}} \to 0 \quad \mathrm {and}\ \quad \frac {\log \left (2 \delta^{\frac {1} {k - p}} + \left (2^{p} C' \right )^{\frac {1} {k - p}} \right )} {\log \delta^{-1}} \to 0.$$ 
Therefore $$\mathrm {Mdim}_{L_{\ell}^{k}} \left (C_{r}^{\ast} (\Gamma) \right ) = \limsup\limits_{\delta \to 0^{+}} \frac {\log D \left (\mathcal L_{1}, \delta \right )} {\log \delta^{-1}} \leq \frac {r} {k - p}.$$ Since the above inequality holds for all $p > \frac {r} {2},$ letting $p \to {\frac {r} {2}}^{+}$ we obtain $$\mathrm {Mdim}_{L_{\ell}^{k}} \left (C_{r}^{\ast} (\Gamma) \right ) \leq \frac {r} {k - \frac {r} {2}} = \frac {2 r} {2 k - r}.$$   
\end{proof}

Now we consider the free group on $2$-generators which has the rapid decay property with decay exponent $2$ but admits exponential growth (\cite{Haagerup}*{Lemma 1.5}). 
\begin{prop}
    For any $k > 2,$ the CQMS $(C^{\ast}_{r}(\mathbb{F}_{2}),L^{k}_{\ell})$ has infinite metric dimension.
\end{prop}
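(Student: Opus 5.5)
Mirroring the lower-bound half of the proof of Theorem \ref{finiteMdim}, with the linear lower bound on the growth function replaced by the exponential growth of $\mathbb{F}_{2}$. First, $(C^{\ast}_{r}(\mathbb{F}_{2}),L^{k}_{\ell})$ is a CQMS for $k>2$: by \cite{Haagerup}*{Lemma 1.5}, $\mathbb{F}_{2}$ has the rapid decay property with decay exponent $2$, and Theorem \ref{CQMSmain} applies. Next, as in the proof of Theorem \ref{finiteMdim} (or directly from the remark after Lemma \ref{VoiculescuCalg}), we have
\[
D\bigl(\lambda(\mathcal{L}_{1})\delta_{e},\delta\bigr)\leq D(\mathcal{L}_{1},\delta)
\]
for every $\delta>0$. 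It therefore suffices to show that $D(\lambda(\mathcal{L}_{1})\delta_{e},\delta)$ grows faster than any power of $\delta^{-1}$.

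For $0<\delta<1$, set $m_{\delta}:=\lfloor \delta^{-1/k}\rfloor$ and consider $U_{\delta}:=\{\delta_{g}:\ell(g)\leq m_{\delta}\}$. By Lemma \ref{deltag}, $L^{k}_{\ell}(\delta\,\delta_{g})=\delta\,\ell(g)^{k}\leq 1$, so $\delta U_{\delta}\subseteq\mathcal{L}_{1}$. The set $\lambda(U_{\delta})\delta_{e}=\{\delta_{g}\}$ is orthonormal in $\ell^{2}(\mathbb{F}_{2})$. Since $D(\delta Y,\delta/2)=D(Y,1/2)$ by scaling, Theorem \ref{Voiculescu} gives
\[
D\bigl(\mathcal{L}_{1},2^{-1}\delta\bigr)\geq D\bigl(\lambda(U_{\delta})\delta_{e},2^{-1}\bigr)\geq \tfrac{3}{4}\lvert U_{\delta}\rvert .
\]

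The key input is the exact ball count in $\mathbb{F}_{2}$ with respect to the standard generators, $\lvert\{g:\ell(g)\leq m\}\rvert=2\cdot 3^{m}-1\geq 3^{m}$. This yields $\log D(\mathcal{L}_{1},\delta/2)\geq \log\frac{3}{4}+(\delta^{-1/k}-1)\log 3$. Dividing by $\log(2\delta^{-1})$ and letting $\delta\to 0^{+}$, the quotient tends to $+\infty$, because $\delta^{-1/k}/\log\delta^{-1}\to\infty$. Hence
\[
\mathrm{Mdim}_{L^{k}_{\ell}}\bigl(C^{\ast}_{r}(\mathbb{F}_{2})\bigr)=\limsup_{\delta\to0^{+}}\frac{\log D(\mathcal{L}_{1},\delta/2)}{\log(2\delta^{-1})}=+\infty .
\]

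The main obstacle is minor: the metric dimension is defined with $\delta$, while we evaluate $D$ at $\delta/2$. This is handled exactly as in the proof of Theorem \ref{finiteMdim}: reparametrize $\delta\mapsto\delta/2$ and note that $\log(2\delta^{-1})\sim\log\delta^{-1}$. The same argument in fact works for any exponentially growing group with rapid decay.
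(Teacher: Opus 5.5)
Your proof is correct and follows the paper's argument. It uses the same sets $U_{\delta}=\{\delta_g:\ell(g)\leq\lfloor\delta^{-1/k}\rfloor\}$ with $\delta U_{\delta}\subseteq\mathcal{L}_1$, Voiculescu's bound $\frac34\lvert U_\delta\rvert$, the ball count $2\cdot 3^{m}-1$ in $\mathbb{F}_2$, and the divergence of $\delta^{-1/k}/\log\delta^{-1}$, and it writes out the steps the paper compresses into ``similar computations'' (the CQMS property, the passage to GNS vectors, scaling, and the reparametrization $\delta\mapsto\delta/2$).
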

\begin{proof}
Retaining the same notations and repeating the similar computations as done in the proof of the last theorem, we find that $$D \left (U_{\delta}, 2^{-1} \right ) \geq \frac {3} {4} \left (2 \cdot 3^{\left \lfloor \left (\delta^{-1} \right )^{\frac {1} {k}} \right \rfloor} - 1 \right ) \geq \frac {3} {4} \left (\frac {2} {3} \cdot 3^{\left (\delta^{-1} \right )^{\frac {1} {k}}} - 1 \right ).$$ 
Consequently, \Bea\mathrm {Mdim}_{L_{\ell}^{k}} \left (C_{r}^{\ast} (\mathbb{F}_{2}) \right ) & \geq & \limsup\limits_{\delta \to 0^{+}} \frac {\log \left (\frac {2} {3} \cdot 3^{\left (\delta^{-1} \right )^{\frac {1} {k}}} - 1 \right )} {\log \delta^{-1}} \\ & = & \limsup\limits_{\delta \to 0^{+}} \frac {\log \left (\frac {2} {3} \cdot 3^{\left (\delta^{-1} \right )^{\frac {1} {k}}} \right )} {\log \delta^{-1}} \\ & = & \limsup\limits_{\delta \to 0^{+}} \frac {\log 3^{\left (\delta^{-1} \right )^{\frac {1} {k}}}} {\log \delta^{-1}} \\ & = & \log 3\ \limsup\limits_{\delta \to 0^{+}} \frac {\left (\delta^{-1} \right )^{\frac {1} {k}}} {\log \delta^{-1}} \\ & = & \infty. \Eea    
\end{proof}

By similar arguments one can show that the metric dimension of $F_m$ is infinite for all $m > 2.$

\vspace{5mm}

{\bf Entropy}:  We discuss entropy of a particular class of automorphisms of $C^{\ast}_{r}(\Gamma)$ for a finitely generated discrete group $\Gamma$ equipped with the word length function $\ell$ such that $\Gamma$ has the rapid decay property. Recall that any automorphism $\alpha$ of the group $\Gamma$ induces an automorphism of $C^{\ast}_{r}(\Gamma)$ (to be denoted by $\alpha$ by an abuse of notation). $\alpha$ is given on $C_{c}(\Gamma)$ by
\begin{displaymath}
    \alpha \left (\sum_{g\in F}c_{g}\delta_{g} \right )=\sum_{g\in F}c_{g}\delta_{\alpha(g)}.
\end{displaymath}
First, we discuss the case when $\Gamma=\mathbb{Z}^d$, where $\mathbb{Z}^d$ is equipped with the usual length function $\ell$ and has polynomial growth. Then this case is reduced to the case of the $C^{\ast}$-algebra $C(\mathbb{T}^d)$ by the Fourier transform which is an isomorphism between $C^{\ast}_{r}(\mathbb{Z}^d)$ and $C(\mathbb{T}^d)$. To establish this connection we note that for two isomorphic $C^{\ast}$-algebras $A,B$ any $\phi_{A}\in\textrm{Aut}(A)$ induces an automorphism $\phi_{B}$ of $B$ by composing $\phi_{A}$ with the isomorphism. As mentioned in Remark \ref{Kerrcomparison}, for any automorphism $\phi$ of $\mathbb{Z}^d$, $\mathrm{Entp}_{L^{k}_{\ell}}(\phi)=\mathrm{Entp}(\phi)$ for any $k>d$. For the next lemma, for a CQMS $(A,L_A)$ we denote the domain $\{a\in A, L(a)<+\infty\}$ of $L$ by $\mathcal{L}_{A}$. $\mathrm{Entp}_{L_{A}}$ will stand for the product entropy as defined by Kerr for a CQMS structure $(A,L_{A})$. 
\begin{lem}\label{abeliancase}
    Let $(A,L_{A})$ and $(B,L_{B})$ be two CQMS. Let us denote the domains of $L_{A}$ and $L_{B}$ by $\mathcal{L}_{A}$ and $\mathcal{L}_{B}$. Let $\alpha:A\rightarrow B$ be an isomorphism such that $\alpha(\mathcal{L}_{A})\subseteq\mathcal{L}_{B}$ and $\alpha^{-1}(\mathcal{L}_{B})\subseteq\mathcal{L}_{A}$. Then for any automorphism $\phi_{A}\in\textrm{Aut}(A)$, 
    \begin{displaymath}
        \mathrm{Entp}_{L_{A}}(\phi_{A})=\mathrm{Entp}_{L_{B}}(\phi_{B}),
    \end{displaymath}
    where $\phi_{B}=\phi_{A}\circ\alpha^{-1}$.
\end{lem}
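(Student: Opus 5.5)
The plan is a direct transport-of-structure argument. I read the automorphism $\phi_{B}$ as the conjugate $\alpha\circ\phi_{A}\circ\alpha^{-1}\in\mathrm{Aut}(B)$, which is the only reading under which $\phi_B$ is an automorphism of $B$. I also take Kerr's product entropy $\mathrm{Entp}_{L_{A}}$ (\cite{Kerr}, Definition 5.2) to be the exact analogue of our definition. That is, it is a supremum over $\delta>0$ and over finite subsets $\Omega$ of $\mathcal{L}_{A}\cap A_{1}$ of $\limsup_{n}\frac1n\log D\bigl(\prod_{i=0}^{n-1}\phi_{A}^{i}(\Omega),\delta\bigr)$, with $D$ computed in the norm of $A$. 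The hypotheses on $\alpha$ involve only the domains $\mathcal{L}_{A},\mathcal{L}_{B}$ and not the values of the seminorms, and this is consistent with the fact that only the domains enter that definition.

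\textbf{Step 1 (the admissible sets correspond).} A $*$-isomorphism of $C^{\ast}$-algebras is isometric, so $\alpha(A_{1})=B_{1}$. The two inclusions $\alpha(\mathcal{L}_{A})\subseteq\mathcal{L}_{B}$ and $\alpha^{-1}(\mathcal{L}_{B})\subseteq\mathcal{L}_{A}$ give $\alpha(\mathcal{L}_{A})=\mathcal{L}_{B}$. Hence $\Omega\mapsto\alpha(\Omega)$ is a bijection from $Pf(\mathcal{L}_{A}\cap A_{1})$ onto $Pf(\mathcal{L}_{B}\cap B_{1})$.

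\textbf{Step 2 (products are transported).} Since $\alpha$ is multiplicative and $\alpha\circ\phi_{A}^{i}=\phi_{B}^{i}\circ\alpha$, we get
\begin{displaymath}
\alpha\Big(\prod_{i=0}^{n-1}\phi_{A}^{i}(\Omega)\Big)=\prod_{i=0}^{n-1}\phi_{B}^{i}(\alpha(\Omega))
\end{displaymath}
for every $n\in\mathbb{N}$ and every $\Omega$.

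\textbf{Step 3 ($D$ is invariant).} For any subset $Y\subseteq A$ and any $\delta>0$ we have $D(Y,\delta)=D(\alpha(Y),\delta)$. Indeed, suppose $Z\in\mathcal{F}(A)$ satisfies $Y\subseteq_{\delta}Z$. Then $\alpha(Z)\in\mathcal{F}(B)$ has the same dimension, because $\alpha$ is a linear bijection. It also satisfies $\alpha(Y)\subseteq_{\delta}\alpha(Z)$, because $\|\alpha(y)-\alpha(z)\|=\|y-z\|$. This gives $D(\alpha(Y),\delta)\le D(Y,\delta)$. Applying the same argument to $\alpha^{-1}$ gives the reverse inequality.

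\textbf{Conclusion.} Combining Steps 2 and 3, $D\bigl(\prod_{i}\phi_{A}^{i}(\Omega),\delta\bigr)=D\bigl(\prod_{i}\phi_{B}^{i}(\alpha(\Omega)),\delta\bigr)$ for all $n$ and $\delta$. Therefore $\mathrm{Entp}_{L_{A}}(\phi_{A},\Omega,\delta)=\mathrm{Entp}_{L_{B}}(\phi_{B},\alpha(\Omega),\delta)$. Taking the supremum over $\delta$, and then over $\Omega$ using the bijection of Step 1, yields the claimed equality.

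There is no genuine analytic obstacle here. The only point needing care is the bookkeeping in the first paragraph: one must confirm that Kerr's definition ranges over finite subsets of the domain intersected with the unit ball. If it ranged over $\{L\le 1\}$, the hypothesis would have to be strengthened to a Lipschitz-type condition on $\alpha$. One must also fix the correct meaning of $\phi_{B}$ as the conjugate $\alpha\circ\phi_{A}\circ\alpha^{-1}$.
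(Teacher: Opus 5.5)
Your proposal is correct and is the transport-of-structure argument that the paper leaves as ``straightforward'': the $*$-isomorphism $\alpha$ is isometric, carries $\mathcal{L}_A\cap A_1$ bijectively onto $\mathcal{L}_B\cap B_1$, intertwines the iterated products, and preserves $D(\cdot,\delta)$. Your reading of $\phi_B$ as $\alpha\circ\phi_A\circ\alpha^{-1}$ is the intended one, and your point that only the domains of $L_A$ and $L_B$ (not their values) enter the definition is why the stated hypotheses suffice.
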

\begin{proof}
    Straightforward.
\end{proof}
As $\mathbb{Z}^d$ has polynomial growth, we have the ${}_c$Lip-norm $L^{k}_{\ell}$ for large enough $k$ whose domain is $C_{c}(\mathbb{Z}^d)$. On the other hand the $C^{\ast}$-algebra $C(\mathbb{T}^d)$ can be equipped with the ${}_c$Lip-norm coming from an ergodic action of $\mathbb{T}^{d}$ exactly like $C(\mathbb{T}^{d}_{\theta})$ as done in \cite{Kerr}. The following can be obtained by adapting the arguments done for $C(\mathbb{T}^{d}_{\theta})$ in \cite{Kerr}:
\begin{thm}\label{commtorus}
     Let $\ell$ be the length function on $\mathbb T^{d}$ obtained by taking the distance to $0$ with respect to the quotient norm on $\mathbb T^{d}$ induced from the Euclidean norm on $\mathbb R^{d}$ scaled by $2 \pi.$ For generating unitaries $u_1, \cdots, u_d$ of $C(\mathbb T^{d}),$ let $\gamma$ be the ergodic action of $\mathbb T^{d}$ on $C(\mathbb T^{d})$ determined by 
     $$\gamma_{\left (t_1, \cdots, t_d \right )} \left (u_j \right ) = e^{2 \pi i t_j} u_j.$$ Let $L$ be the ${}_c\mathrm{Lip}$-norm on $C(\mathbb T^{d})$ arising from the length function $\ell$ and the ergodic action $\gamma;$ $\alpha_{T}$ be an automorphism on $C \left (\mathbb T^{d} \right )$ induced by $T \in GL_{d} (\mathbb Z).$ Then $$\mathrm {Entp}_{L} \left (\alpha_{T} \right ) = \sum\limits_{\left \lvert \lambda_i \right \rvert > 1} \log \left \lvert \lambda_i \right \rvert,$$ where $\lambda_i$'s are eigenvalues of $T$ counted with spectral multiplicity.  
\end{thm}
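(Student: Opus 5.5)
We will follow Kerr's computation for toral automorphisms of the noncommutative torus $C(\mathbb{T}^d_\theta)$, specialised to $\theta=0$. We work on the Fourier side, where $u^{m}=u_1^{m_1}\cdots u_d^{m_d}$ for $m\in\mathbb{Z}^d$ and $\alpha_T(u^m)=u^{Tm}$ (up to the convention $T$ versus $T^{t}$, which has the same eigenvalues). We prove a lower bound and an upper bound, both equal to $h:=\sum_{|\lambda_i|>1}\log|\lambda_i|$. The classical input is the count of lattice points: for a bounded box $Q\subset\mathbb{R}^d$ containing the origin in its interior, the set
\[
\textstyle\sum_{i=0}^{n-1}T^{i}(Q\cap\mathbb{Z}^d)
\]
has cardinality $e^{nh+o(n)}$. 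One proves this by splitting $\mathbb{R}^d$ into the unstable, central and stable $T$-invariant subspaces (real Jordan form). The sum grows like $\prod_{|\lambda_i|>1}|\lambda_i|^{n}$ in the unstable directions, at most polynomially in the central directions, and stays bounded in the stable directions. This is the algebraic/Bowen entropy of $T$ acting on $\mathbb{Z}^d$.

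\textbf{Lower bound.} Take $\Omega=\{u^{m}/?: m\in Q\cap\mathbb{Z}^d\}$; more simply, let $\Omega$ consist of the monomials $u^m$ with $m\in Q\cap\mathbb{Z}^d$. These lie in the unit ball and in the domain of $L$, since they are $\gamma$-smooth. Products $\prod_{i}\alpha_T^{i}(u^{m_i})$ are scalar multiples of unimodular modulus of $u^{\sum T^i m_i}$. In the GNS space of the Haar trace they form an orthonormal set after removing repetitions, and its cardinality is the lattice count above. By Lemma \ref{VoiculescuCalg} and Theorem \ref{Voiculescu},
\[
D\Big(\prod_{i=0}^{n-1}\alpha_T^i(\Omega),\delta\Big)\ge(1-\delta^2)e^{nh+o(n)},
\]
so $\mathrm{Entp}_L(\alpha_T)\ge h$.

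\textbf{Upper bound.} This is the main obstacle. Kerr's definition takes a supremum over finite subsets $\Omega$ of the unit ball of the domain, not just monomials. Fix such an $\Omega$ and $\delta>0$. By Lip-norm smoothness, each $a\in\Omega$ is approximated in norm by a trigonometric polynomial supported in a box $Q$, with tail controlled by $L(a)$; we use the Fejér-type approximation available from the ergodic action, as in Kerr. The difficulty is that the errors accumulate over $n$-fold products. We control this using the telescoping estimate
\[
\|a_1\cdots a_n-b_1\cdots b_n\|\le\sum\|a_i-b_i\|,
\]
valid since all factors are contractions. So we must approximate $\alpha_T^{i}(a)$ within $\delta/n$, or better, with errors summable in $i$. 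Because $\alpha_T$ is not Lipschitz, the Lip-norms $L(\alpha_T^i(a))$ grow like $\|T\|^{i}$, so naive truncation boxes grow. The fix we try first is to truncate $a$ itself once, to precision $\delta/n$ using a box of radius $\mathrm{poly}(n)$, and then apply $\alpha_T^i$. This is isometric on the $C^*$-norm, so the error does not grow. The product of truncations is then supported on
\[
\textstyle\sum_i T^{i}(Q_n\cap\mathbb{Z}^d)
\]
with $Q_n$ of polynomial radius in $n$.

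It remains to show that the lattice count for this sum is still $e^{nh+o(n)}$. Enlarging the box polynomially only changes the count by a factor polynomial in $n$ in each direction, since stable directions contribute at most $\mathrm{poly}(n)$ and unstable ones an extra $\mathrm{poly}(n)$ factor. The span of the corresponding monomials then gives
\[
\limsup\tfrac1n\log D\le h.
\]

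Finally, by Remark \ref{Kerrcomparison} and Lemma \ref{abeliancase}, the same value is obtained for $\mathrm{Entp}$ on $C^*_r(\mathbb{Z}^d)$. The Fourier transform maps $C_c(\mathbb{Z}^d)$ into the smooth domain and back, after checking that the domain of $L$ is compatible.
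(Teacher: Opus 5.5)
Your argument is correct and follows essentially the route the paper intends, namely Kerr's computation for $C(\mathbb{T}^d_\theta)$ specialised to $\theta=0$. The lower bound uses orthonormal monomials together with Voiculescu's lemma. The upper bound uses a single Fej\'er truncation of each element of $\Omega$ at polynomial radius, which is legitimate since $\alpha_T$ is isometric and Fej\'er means are contractions, followed by a volume count of the lattice points in $\sum_{i<n}T^iQ_n$.

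Two small corrections are needed:
\begin{itemize}
\item For a \emph{fixed} box, the count $e^{nh+o(n)}$ is only an upper bound. For example, $\bigl|\sum_{i<n}T^i(\{-1,0,1\}^d)\bigr|\le 3^{dn}$, which is smaller than $e^{nh}$ when $h>d\log 3$. So in the lower bound you must take the supremum over boxes and invoke the classical formula $h_{\mathrm{alg}}(T)=\sum_{|\lambda_i|>1}\log|\lambda_i|$ (see \cite{MR540634}); the unstable/central/stable volume heuristic only bounds the count from above.
\item Your closing paragraph is not part of this statement, and it is wrong as written: the domain of $L$ is much larger than the trigonometric polynomials, so the Fourier transform does not map it back into $C_c(\mathbb{Z}^d)$. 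This is exactly why the paper passes to the restriction $\widehat{L}$ in Theorem \ref{entpZ}.
\end{itemize}
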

The Fourier transform 
$\mathcal{F}:C^{\ast}_{r}(\mathbb{Z}^d)\rightarrow C(\mathbb{T}^d)$ maps $C_{c}(\mathbb{Z}^{d})$ to the polynomials in the generating unitaries. The automorphism group of the group $\mathbb{Z}^{d}$ is $GL_{d}(\mathbb{Z})$. So any element $\phi\in GL_{d}(\mathbb{Z})$ gives an automorphism of $C^{\ast}_{r}(\mathbb{Z}^d)$. Such an automorphism is transported via the Fourier transform to the toral automorphisms of $C(\mathbb{T}^d)$. Therefore, Lemma \ref{abeliancase} and Theorem \ref{commtorus} combine to give us the following:
\begin{thm} \label{entpZ}
 Let $\alpha_{T}$ be an automorphism of $C \left (\mathbb T^{d} \right )$ induced by $T \in GL_{d} \left (\mathbb Z \right ).$ Let $\beta_{T}$ be the automorphism on $C_{r}^{\ast} \left (\mathbb Z^{d} \right )$ associated to $\alpha_{T}.$ Then for all $k > d$ $$\mathrm {Entp}_{L_{\ell}^{k}} \left (\beta_{T} \right ) = \sum\limits_{\left \lvert \lambda_i \right \rvert > 1} \log \left \lvert \lambda_i \right \rvert,$$ where $\lambda_i$'s are eigenvalues of $T$ counted with spectral multiplicity. Consequently, by Remark \ref{Kerrcomparison}, \begin{displaymath}\mathrm{Entp}\left (\beta_{T} \right ) = \sum\limits_{\left \lvert \lambda_i \right \rvert > 1} \log \left \lvert \lambda_i \right \rvert.\end{displaymath}
\end{thm}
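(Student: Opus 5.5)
The plan is to transport the problem to the torus via Lemma \ref{abeliancase} and then apply Theorem \ref{commtorus}. We take $A=C^{\ast}_{r}(\mathbb{Z}^d)$ with the Lip-norm $L^{k}_{\ell}$, whose domain is $\mathcal{L}_{A}=C_{c}(\mathbb{Z}^d)$, and $B=C(\mathbb{T}^d)$ with the Lip-norm $L$ of Theorem \ref{commtorus}. The isomorphism is the Fourier transform $\mathcal{F}:A\to B$.

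First we check that $(A,L^{k}_{\ell})$ is a CQMS for $k>d$. Since $\mathbb{Z}^d$ has polynomial growth of exponent $d$, it has rapid decay with exponent $d/2$ (as already used in Theorem \ref{finiteMdim}). So $k>d$ suffices by Theorem \ref{CQMSmain}.

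Second, we verify the domain hypotheses of Lemma \ref{abeliancase}. The Fourier transform maps $C_{c}(\mathbb{Z}^d)$ bijectively onto the trigonometric polynomials in $u_1,\dots,u_d$. We then need that the domain of $L$ is exactly these polynomials, or at least that the two domains correspond under $\mathcal{F}$. This is the step I expect to be the main obstacle. The Lip-norm from an ergodic action is naturally finite on all Lipschitz elements, a strictly larger set than the polynomials. Following Kerr's convention of ${}_c$Lip-norms, I would restrict $L$ to the polynomial $\ast$-subalgebra, setting it to $+\infty$ elsewhere, or take the closure of this restriction. I would then argue that Theorem \ref{commtorus} still holds for this choice. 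Kerr's entropy computation for $\mathbb{T}^d_\theta$ uses only the polynomial elements $u^n$ for the lower bound. For the upper bound it uses the fact that Lip-balls are approximated by spans of monomials of bounded degree. Restricting the domain only shrinks the Lip-balls and the sets $\Omega$, so the upper bound persists and the lower-bound witnesses remain admissible.

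Third, we identify the automorphisms. For $T\in GL_d(\mathbb{Z})=\mathrm{Aut}(\mathbb{Z}^d)$, the induced automorphism $\beta_T$ sends $\delta_m\mapsto\delta_{Tm}$. Conjugating by $\mathcal{F}$ gives $u^m\mapsto u^{Tm}$, which is the toral automorphism $\alpha_T$. Hence $\alpha_T=\mathcal{F}\circ\beta_T\circ\mathcal{F}^{-1}$.

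Finally, Lemma \ref{abeliancase} gives $\mathrm{Entp}_{L^{k}_{\ell}}(\beta_T)=\mathrm{Entp}_{L}(\alpha_T)$, and Theorem \ref{commtorus} evaluates the right side as $\sum_{|\lambda_i|>1}\log|\lambda_i|$. Remark \ref{Kerrcomparison} then yields the statement for $\mathrm{Entp}(\beta_T)$.

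As a sanity check on the ``straightforward'' Lemma \ref{abeliancase}, note that Kerr's entropy depends on $L$ only through the domain of finite subsets of unit-ball elements with finite Lip-norm. The quantities $D(\cdot,\delta)$ are computed in the $C^{\ast}$-norm, which $\mathcal{F}$ preserves isometrically. Equality of the domains under $\mathcal{F}$ is therefore exactly what is needed.
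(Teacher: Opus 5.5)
Your proposal is correct and follows essentially the paper's proof. Like the paper, you replace $L$ by its restriction $\widehat{L}$ to $\mathcal{P}(\mathbb{T}^d)=\mathcal{F}(C_c(\mathbb{Z}^d))$, so that Lemma \ref{abeliancase} gives $\mathrm{Entp}_{L^{k}_{\ell}}(\beta_T)=\mathrm{Entp}_{\widehat{L}}(\alpha_T)$; you then take the lower bound from Kerr's monomial argument and the upper bound from $\mathrm{Entp}_{\widehat{L}}(\alpha_T)\le\mathrm{Entp}_{L}(\alpha_T)$ (fewer admissible $\Omega$) together with Theorem \ref{commtorus}.

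Drop the ``closure'' alternative. The closure of $\widehat{L}$ is finite on non-polynomial elements such as $\sum_{n\geq 1}n^{-3}u_1^{n}$, so Lemma \ref{abeliancase} would no longer apply. Likewise, the $L$ in your final paragraph should read $\widehat{L}$.
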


\begin{proof}
Let $L$ be the ${}_c\mathrm{Lip}$-norm on $C (\mathbb T^{d})$ as in Theorem \ref{commtorus} and let $\mu : = \sum\limits_{\left \lvert \lambda_i \right \rvert > 1} \log \left \lvert \lambda_i \right \rvert,$ where $\lambda_i$'s are eigenvalues of $T$ counted with spectral multiplicity. Let $\widehat {L}$ be the restriction of $L$ to the set of all polynomials $\mathcal {P} (\mathbb T^{d})$ in the generating unitaries of $\mathbb T^{d}$. Then by the virtue of Lemma \ref{abeliancase}, we have \begin{equation} \label{ent_1} \mathrm {Entp}_{L_{\ell}^{k}} \left (\beta_{T} \right ) = \mathrm {Entp}_{\widehat{L}} \left (\alpha_{T} \right ), \end{equation} for all $k > d.$ Recall that the lower bound of Kerr's entropy of $\alpha_{T}$ is obtained by exploiting only the finite set of polynomials in the generating unitaries of $\mathbb T^{d},$ namely by considering the sets of the form $$U_K : = \left \{u_1^{k_1} u_2^{k_2} \cdots u_d^{k_d}\ :\ \left (k_1, k_2, \cdots, k_d \right ) \in K \right \},$$ for any finite subset $K \subseteq \mathbb Z^{d}$. Therefore, following the proof of \cite{Kerr}*{Proposition 7.2}, we get \begin{equation} \label{ent_2} \mathrm {Entp}_{\widehat{L}} \left (\alpha_{T} \right ) \geq \mu. \end{equation} Also it is clear that $\mathrm {Entp}_{\widehat{L}} \left (\alpha_{T} \right ) \leq \mathrm {Entp}_{L} \left (\alpha_{T} \right ).$ So by invoking Theorem \ref{commtorus}, it follows that \begin{equation} \label{ent_3} \mathrm {Entp}_{\widehat{L}} \left (\alpha_{T} \right ) \leq \mu. \end{equation} Thus by combining \eqref{ent_1}, \eqref{ent_2} and \eqref{ent_3}, the desired result follows. 
\end{proof}
So, we are left with the cases of non-abelian groups. We start by obtaining a lower bound of the product entropy of an automorphism. To get the lower bound, we do not have to use any CQMS structure on the group $C^{\ast}$-algebra. To that end, we recall the definition of algebraic entropy of an automorphism of a discrete group. Let $\Gamma$ be a discrete group with an automorphism $\phi$. For any finite subset $\mathcal{F}$ of $\Gamma$, denote the set $\{\mathcal{F}\phi(\mathcal{F})...\phi^{n-1}(\mathcal{F})\}$ by $\mathcal{F}_n$, where as usual for two sets $E_1,E_2\subset\Gamma$, $E_1E_2=\{g_{1}g_{2}:g_{i}\in E_{i} \ \mathrm{for} \ i=1,2\}$. Denoting the cardinality of any set $E\subseteq\Gamma$ by $\lvert E\rvert$, we recall the following definition:
\begin{defn}\label{algentp}(\cite{MR0385834}*{Definition 1.1})
    The algebraic entropy of an automorphism $\phi$ of a discrete group $\Gamma$ is defined to be
    \begin{align*}
        && h_{\mathrm{alg}}(\phi):=\sup_{\mathcal{F}\subseteq\Gamma \ \textrm{finite}}\limsup\limits_{n\to\infty}\frac{1}{n}\log\lvert\mathcal{F}_n\rvert.
    \end{align*}
\end{defn}

\begin{rem}
Note that the above limit superior in the definition of the algebraic entropy is actually a limit. This is because the sequence $\{c_n \}_{n \geq 1}$ defined by $c_n : = \log \left \lvert \mathcal F_n \right \rvert$ has the subadditivity property i.e. $c_{n + m} \leq c_n + c_m$ for all $m, n \in \mathbb N$ \cite{Dikranjan2013TopologicalEA}*{Lemma 5.1.1.}. So by Fekete's lemma, the sequence $\left \{\frac {c_n} {n} \right \}_{n \geq 1}$ has limit and in fact $\lim\limits_{n \to \infty} \frac {c_n} {n} = \inf\limits_{n \in \mathbb N} \frac {c_n} {n}.$
\end{rem}
\begin{thm}\label{lowerbound}
Let $\Gamma$ be a finitely generated discrete group with an automorphism $\phi$. Then
\begin{align*}
&& \mathrm{Entp}(\phi)\geq h_{\mathrm{alg}}(\phi).
\end{align*}
\end{thm}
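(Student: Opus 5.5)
Given a finite subset $\mathcal{F}\subseteq\Gamma$, I take $\Omega:=\{\delta_g : g\in\mathcal{F}\}$. Each $\delta_g$ is a unitary in $C_c(\Gamma)$, so $\Omega\in Pf(C_c(\Gamma)_1)$. Since $\phi(\delta_g)=\delta_{\phi(g)}$ and $\delta_{g}\delta_{h}=\delta_{gh}$, we get
\[
\prod_{i=0}^{n-1}\phi^{i}(\Omega)=\{\delta_{g_0\phi(g_1)\cdots\phi^{n-1}(g_{n-1})}: g_i\in\mathcal{F}\}=\{\delta_h : h\in\mathcal{F}_n\}.
\]
So the product set is exactly the set of delta functions indexed by $\mathcal{F}_n$, with no repetitions beyond those already present in $\mathcal{F}_n$.

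For the lower bound I use the trace as in the Remark following Lemma \ref{VoiculescuCalg}. The GNS vectors $\lambda(\delta_h)\delta_e=\delta_h$, $h\in\mathcal{F}_n$, form an orthonormal set in $\ell^2(\Gamma)$. Lemma \ref{VoiculescuCalg} together with Theorem \ref{Voiculescu} then gives, for every $0<\delta<1$,
\[
D\Big(\prod_{i=0}^{n-1}\phi^{i}(\Omega),\delta\Big)\ \ge\ D\big(\{\delta_h\}_{h\in\mathcal{F}_n},\delta\big)\ \ge\ (1-\delta^{2})\,\lvert\mathcal{F}_n\rvert .
\]
Taking $\frac1n\log$ and $\limsup_{n}$, the constant $\log(1-\delta^2)$ disappears. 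Hence $\mathrm{Entp}(\phi,\Omega,\delta)\ge\limsup_n\frac1n\log\lvert\mathcal{F}_n\rvert$ for each fixed $\delta\in(0,1)$, and therefore $\mathrm{Entp}(\phi,\Omega)$ satisfies the same bound.

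Taking the supremum over all finite $\mathcal{F}$ gives $\mathrm{Entp}(\phi)\ge\sup_{\mathcal{F}}\lim_n\frac1n\log\lvert\mathcal{F}_n\rvert=h_{\mathrm{alg}}(\phi)$ by Definition \ref{algentp}. No CQMS structure is used anywhere.

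There is no serious obstacle. The main point is to check that the product in $C_c(\Gamma)$ of the sets $\phi^i(\Omega)$ is indeed the set indexed by the group product set $\mathcal{F}_n$, which holds because convolution of delta functions mirrors group multiplication. The one further requirement is $\delta<1$, so that Voiculescu's bound is nontrivial; this is harmless since the entropy is a supremum over $\delta$.
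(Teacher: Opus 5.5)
Your proof is correct and follows the paper's argument: you take $\Omega=\{\delta_g : g\in\mathcal{F}\}$, identify the product set with $\{\delta_h : h\in\mathcal{F}_n\}$, and pass through the trace using Lemma \ref{VoiculescuCalg} and Theorem \ref{Voiculescu}. Your version bounds $\mathrm{Entp}(\phi,\Omega,\delta)$ for each fixed $\mathcal{F}$ and only then takes the supremum. This is slightly cleaner than the paper's $\varepsilon$-and-subsequence bookkeeping, and it handles the case $h_{\mathrm{alg}}(\phi)=+\infty$ (needed later for groups of exponential growth) without any modification.
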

\begin{proof}
Let us choose $\varepsilon > 0.$ By the definition of $h_{\mathrm{alg}}(\phi)$, we can get hold of some finite subset $\mathcal{F}^{\varepsilon} \subseteq \Gamma$ and a subsequence $\left \{n_k \right \}_{k \geq 1}$ of natural numbers such that $$\frac {1} {n_k} \log \left \lvert \mathcal{F}^{\varepsilon}_{n_k} \right \rvert \geq h_{\mathrm{alg}}(\phi) - \varepsilon.$$
In other words, for all $k \geq 1$
$$\left \lvert \mathcal{F}^{\varepsilon}_{n_k} \right \rvert \geq e^{n_k (h_{\mathrm{alg}}(\phi) - \varepsilon)}.$$
Let $\Omega_{\varepsilon} \subseteq C_c (\Gamma)_1$ be the finite subset given by $$\Omega_{\varepsilon} : = \left \{\delta_{g}\ :\ g \in \mathcal{F}^{\varepsilon} \right \},$$ and, for $n \geq 1,$ define $$\Omega_{\varepsilon}^{(n)} : = \prod\limits_{i = 1}^{n} \varphi^{i - 1} \left (\Omega_{\varepsilon} \right ).$$ Then clearly $$\Omega_{\varepsilon}^{(n)} = \left \{\delta_{g}\ :\ g \in \mathcal{F}^{\varepsilon}_{n} \right \},$$ for all $n \geq 1.$ Invoking Theorem \ref{Voiculescu} it follows that  $$D \left (\lambda \left (\Omega_{\varepsilon}^{(n_k)} \right ) \delta_e, \delta \right ) \geq \left (1 - \delta^{2} \right ) e^{n_k (h_{\mathrm{alg}}(\phi) - \varepsilon)},$$ for all $k \geq 1$ and for all $\delta > 0.$ Thus for all $\delta > 0$ we have 
\Bea
\mathrm {Entp} \left (\psi, \Omega_{\varepsilon}, \delta \right ) & \geq & \limsup\limits_{k \to \infty} \frac {1} {n_k} \log D \left (\Omega_{\varepsilon}^{(n_k)}, \delta \right ) \\ & \geq & \limsup\limits_{k \to \infty} \frac {1} {n_k} \log D \left (\lambda \left (\Omega_{\varepsilon}^{(n_k)} \right ) \delta_e, \delta \right ) \\ & \geq & \lim\limits_{k \to \infty} \frac {1} {n_k} \log \left (\left (1 - \delta^{2} \right ) e^{n_k (h_{\mathrm{alg}}(\phi) - \varepsilon)} \right ) \\ & = & h_{\mathrm{alg}}(\phi) - \varepsilon.
\Eea
Taking supremum over all finite subset $\Omega \subseteq C_c (\Gamma)_1$ and $\delta > 0$ it follows that $$\mathrm {Entp} (\phi) \geq h_{\mathrm{alg}}(\phi) - \varepsilon.$$ Finally since $\varepsilon > 0$ was arbitrary, letting $\varepsilon \to 0^{+},$ the desired result follows.
\end{proof}

\begin{rem}
Any $T\in GL_{d}(\mathbb{Z})$ is an automorphism of $\mathbb{Z}^d$. By \cite{MR540634}*{Example 2}, $h_{\mathrm{alg}}(T)=\sum\limits_{\left \lvert \lambda_i \right \rvert > 1} \log \left \lvert \lambda_i \right \rvert,$ where $\lambda_i$'s are eigenvalues of $T$ counted with spectral multiplicity. Note that $T$ produces the automorphism $\beta_{T}$ of $C^{\ast}_{r}(\mathbb{Z}^d)$ in the notation of Theorem \ref{entpZ}. Thus the lower bound obtained for the product entropy in Theorem \ref{lowerbound} is typically strict by Theorem \ref{entpZ}.
\end{rem}
Now we shall get an upper bound of the product entropy of an automorphism by exploiting the CQMS structures on the group $C^{\ast}$-algebras coming from the ${}_c\mathrm{Lip}$-norms $L^{k}_{\ell}$. Let us recall the notion of a `geometric' entropy of an automorphism of a finitely generated discrete group $\Gamma$ equipped with some word length $\ell$ in the sense of \cite{MR4168745}. Note that in \cite{MR4168745}, the geometric entropy is referred to as algebraic entropy. 
\begin{defn}\label{geoment} (see \cite{MR4168745}) Given any automorphism $\phi$ of $\Gamma$, the geometric entropy $h_{\mathrm{geo}}(\phi)$ is defined to be the following number:
\begin{align*}
    && h_{\mathrm{geo}}(\phi):=\log\Big(\sup_{g\in\Gamma}\{\limsup\limits_{n}\ell(\phi^{n}(g))^{\frac{1}{n}}\}\Big).
\end{align*}
\end{defn}
We shall first prove two auxilliary lemmas. To that end, for any $\phi\in\mathrm{Aut}(\Gamma)$, we shall denote the quantity $\sup_{g\in\Gamma}\{\limsup\limits_{n}\ell(\phi^{n}(g))^{\frac{1}{n}}\}$ by $\mathrm{Gr}(\phi)$. Note that since $\ell$ is a word length, $\mathrm{Gr}(\phi)\geq 1$ and consequently $h_{\mathrm{geo}}(\phi)=\log(\mathrm{Gr}(\phi))\geq 0$.

\begin{lem} \label{lem 2.23}
Let $\Gamma$ be a finitely generated discrete group with the rapid decay property with decay exponent $r>0$; $\ell$ be a word length function so that for any natural number $k>r$, $(C^{\ast}_{r}(\Gamma),L^{k}_{\ell})$ is a CQMS; $\phi$ be an automorphism of $\Gamma$; $\Omega$ be a finite subset of $C_{c}(\Gamma)$ and $M_{\Omega}=\max\limits_{f\in \Omega}\{\lvert\mathrm{supp}(f)\rvert\}.$ Then given any $\varepsilon > 0$ there exists $q \in \mathbb N$ such that for all $n \geq q$ and for all $f \in \Omega$ 
\begin{displaymath}
    L^{k}_{\ell}(\phi^{n}(f))\leq \sqrt{\M_{\Omega}}\ (\mathrm{Gr}(\phi) + \varepsilon)^{kn}L^{k}_{\ell}(f), \ \forall \ f\in\Omega.
\end{displaymath}
\end{lem}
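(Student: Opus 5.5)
Write $f=\sum_{g\in F}a_g\delta_g$ with $F=\mathrm{supp}(f)$, so $|F|\le M_\Omega$. The plan is to sandwich $L^k_\ell$ between the $\ell^1$-type bound \eqref{eqn 2.1} and the $\ell^2$-type bound \eqref{Lbound}, and to control the growth of lengths of the finitely many group elements involved.

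\textbf{Step 1 (finitely many elements).} Let $G_\Omega:=\bigcup_{f\in\Omega}\mathrm{supp}(f)$, a finite subset of $\Gamma$. By definition of $\mathrm{Gr}(\phi)$, for each $g\in G_\Omega$ we have $\limsup_n \ell(\phi^n(g))^{1/n}\le \mathrm{Gr}(\phi)$. Hence there is $q_g$ such that $\ell(\phi^n(g))\le (\mathrm{Gr}(\phi)+\varepsilon)^n$ for all $n\ge q_g$. Put $q:=\max_{g\in G_\Omega}q_g$. Since $G_\Omega$ is finite, this uniformity is the only place finiteness of $\Omega$ is used.

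\textbf{Step 2 (upper bound via \eqref{eqn 2.1}).} For $n\ge q$, $\phi^n(f)=\sum_{g\in F}a_g\delta_{\phi^n(g)}$, and $\phi^n$ is injective on $\Gamma$. Therefore
\[
L^k_\ell(\phi^n(f))\le\sum_{g\in F}\ell(\phi^n(g))^k|a_g|\le(\mathrm{Gr}(\phi)+\varepsilon)^{kn}\sum_{g\in F}|a_g|.
\]

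\textbf{Step 3 (relating $\sum|a_g|$ to $L^k_\ell(f)$).} By Cauchy--Schwarz, $\sum_{g\in F}|a_g|\le\sqrt{|F|}\,(\sum|a_g|^2)^{1/2}$. We then want $(\sum|a_g|^2)^{1/2}\le(\sum\ell(g)^{2k}|a_g|^2)^{1/2}\le L^k_\ell(f)$, using \eqref{Lbound}.

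This is the main obstacle. The first inequality needs $\ell(g)\ge1$ on the support, which fails if $e\in F$. A coefficient at $e$ is invisible to $L^k_\ell$, since $\delta_e$ is in the kernel, while it does contribute to the right side of Step 2 only through $\ell(\phi^n(e))=0$. So the correct route is to drop $g=e$ from the start: in Step 2 the term for $g=e$ vanishes because $\phi^n(e)=e$ has length $0$. The sum in Step 2 is therefore over $F\setminus\{e\}$, where $\ell(g)\ge1$ because $\ell$ is a word length taking values in nonnegative integers and vanishing only at $e$.

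Cauchy--Schwarz over $F\setminus\{e\}$ then gives
\[
\sum_{g\ne e}|a_g|\le\sqrt{M_\Omega}\,\Big(\sum_{g}\ell(g)^{2k}|a_g|^2\Big)^{1/2}\le\sqrt{M_\Omega}\,L^k_\ell(f).
\]
Combining this with Step 2 yields the claim for all $n\ge q$ and all $f\in\Omega$ simultaneously.
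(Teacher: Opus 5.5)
Your proposal is correct and is essentially the paper's proof. Both arguments bound $L^k_\ell(\phi^n(f))$ by $\sum_g |a_g|\,\ell(\phi^n(g))^k$, use finiteness of $\bigcup_{f\in\Omega}\mathrm{supp}(f)$ to get a uniform $q$, and finish with Cauchy--Schwarz (giving the factor $\sqrt{M_\Omega}$) and \eqref{Lbound}. The paper gets the first bound from the triangle inequality and Lemma~\ref{deltag} rather than from \eqref{eqn 2.1} and injectivity of $\phi^n$, which makes no difference.

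Your explicit treatment of $g=e$ is slightly more careful than the paper. The paper asserts $(\mathrm{Gr}(\phi)+\varepsilon)^n\le(\mathrm{Gr}(\phi)+\varepsilon)^n\ell(g)$ for every $g$ in the support, which is false at $g=e$. The inequality actually needed there, $\ell(\phi^n(e))\le(\mathrm{Gr}(\phi)+\varepsilon)^n\ell(e)$, still holds trivially because both sides vanish.
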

\begin{proof}
  For all $g \in \bigcup\limits_{f \in \Omega} \mathrm {supp} (f)$, by the definition of limit superior, we can get hold of some $q \in \mathbb N$ such that for all $n \geq q$ and for all $g \in \bigcup\limits_{f \in \Omega} \mathrm {supp} (f)$
$$\ell (\phi^{n} (g))^{\frac {1} {n}} < \mathrm{Gr}(\phi) + \varepsilon.$$ In other words, for all $g \in \bigcup\limits_{f \in \Omega} \mathrm {supp} (f)$ and for all $n \geq q$ $$\ell (\phi^{n} (g)) < (\mathrm{Gr}(\phi) + \varepsilon)^{n} \leq (\mathrm{Gr}(\phi) + \varepsilon)^{n} \ell (g),$$ since $\ell$ is a word length function. Now for any $f = \sum\limits_{g \in \Gamma} c_{g} \delta_{g} \in \Omega$ and for any $n \geq q$, we have  \Bea
        L^{k}_{\ell} \left (\phi^{n}(f) \right )& = & L^{k}_{\ell} \left (\sum c_{g}\delta_{\phi^{n}(g)} \right )\\
        & \leq & \sum \left \lvert c_{g} \right \rvert L^{k}_{\ell} \left (\delta_{\phi^{n}(g)} \right )\\ 
        & \leq & \sum \lvert c_{g}\rvert (\ell(\phi^{n}(g)))^{k}\ (\mathrm{by\ Lemma}\ \ref{deltag}) \\
        & \leq & (\mathrm{Gr}(\phi) + \varepsilon)^{nk}\sum |c_{g}|(\ell(g))^{k}\\
        & \leq & (\mathrm{Gr}(\phi) + \varepsilon)^{nk} (\sum \lvert c_{g}\rvert^{2}(\ell(g))^{2k})^{\frac{1}{2}}\sqrt{M_{\Omega}}\\
        & \leq & \sqrt{\M_{\Omega}}\ (\mathrm{Gr}(\phi) + \varepsilon)^{kn}L^{k}_{\ell}(f) \ (\mathrm{by} \ \mathrm{inequality} \ \ref{Lbound}).
    \Eea
\end{proof}
\begin{lem} \label{lem 2.24}
Let $\Gamma, \phi,\ell$ be as in the previous lemma. For any finite set $\Omega\subseteq C_{c}(\Gamma)_1$, let $M_{\Omega}=\max\limits_{f\in \Omega}\{\lvert\mathrm{supp}(f)\rvert\}$ and $\Omega_{n}=\prod_{i=0}^{n}\phi^{i}(\Omega)$. Then given any $\varepsilon > 0$ there exists $q \in \mathbb N$ such that for all $n \geq q$ and for all $a \in \Omega_n$,
\begin{displaymath}
    L_{\ell}^{k} (a)\leq C \left (\sqrt{\M_{\Omega}} \right )^{k}( \mathrm{Gr}(\phi)+\varepsilon)^{k (n - 1)} (n - q + 1)^k,
\end{displaymath}
for some constant $C > 0$ depending upon $\Omega$.
\end{lem}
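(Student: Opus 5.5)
We apply Corollary \ref{LeibnitzL} to a product $a=f_0\phi(f_1)\cdots\phi^{n}(f_n)\in\Omega_n$ with $f_i\in\Omega$, and bound each factor using Lemma \ref{lem 2.23}. Two simplifications are available. Since $\phi^i$ is an automorphism of $C^{\ast}_r(\Gamma)$ preserving $C_c(\Gamma)$, we have $\|\phi^i(f_i)\|_{\mathrm{red}}=\|f_i\|_{\mathrm{red}}\le 1$. Consequently any factor with $r_i=0$ contributes at most $1$ (we read $L^0_\ell(f)=\|f\|_{\mathrm{op}}$). Hence
\begin{displaymath}
L^k_\ell(a)\le \sum_{r_0+\cdots+r_n=k}\binom{k}{r_0\ \cdots\ r_n}\prod_{i:\,r_i\ge 1}L^{r_i}_\ell(\phi^i(f_i)).
\end{displaymath}

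Next we produce a uniform bound $L^{j}_\ell(\phi^i(f))\le \sqrt{M_\Omega}\,(\mathrm{Gr}(\phi)+\varepsilon)^{ji}\,K$ for all $1\le j\le k$, $i\ge q$ and $f\in\Omega$. Here $K:=\max_{f\in\Omega,\,1\le j\le k}\max\{1,L^j_\ell(f)\}$ and $q$ is the integer from Lemma \ref{lem 2.23}, which we rerun for each exponent $j\le k$ in place of $k$. This works because the proof of that lemma only uses Lemma \ref{deltag}, \eqref{eqn 2.1} and \eqref{Lbound}, all valid for every exponent. For the finitely many $i<q$ we set $C_1:=\max_{f\in\Omega,\,i<q,\,1\le j\le k}\max\{1,L^j_\ell(\phi^i(f))\}$. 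Increasing $C_1$ if needed so that it also dominates the $K$-factors, every factor satisfies $L^{r_i}_\ell(\phi^i(f_i))\le C_1\sqrt{M_\Omega}\,(\mathrm{Gr}(\phi)+\varepsilon)^{r_i i}$ when $r_i\ge1$.

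Since at most $k$ indices have $r_i\ge 1$, the product over those indices is bounded by $(C_1\sqrt{M_\Omega})^{k}\,(\mathrm{Gr}(\phi)+\varepsilon)^{\sum r_i i}$. The largest index is $n$, so $\sum r_i i\le kn$. It remains to count the multinomial sum. The crude count $\sum\binom{k}{r_0\ \cdots\ r_n}=(n+1)^k$ gives a factor of the required polynomial shape $(n-q+1)^k$ after absorbing constants; for $n\ge q$ we have $n+1\le (q+1)(n-q+1)$, and $(q+1)^k$ goes into $C$. The cleaner route is to split indices into $i<q$ and $i\ge q$, use Lemma \ref{lem 2.23} only for $i\ge q$, and put the bounded contributions of the first $q$ indices into $C$. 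The number of terms indexed by $i\in[q,n]$ then produces $(n-q+1)^k$ directly via the multinomial identity, since $\sum_{s}\binom{k}{s}q^{k-s}(n-q+1)^s=(n+1)^k$ for the two blocks.

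The step I expect to be the main obstacle is matching the stated exponent $k(n-1)$ rather than $kn$. There is also an indexing question: whether $\Omega_n$ ends at $\phi^{n}$ or $\phi^{n-1}$, since the definition of $\mathrm{Entp}$ uses $\prod_{i=0}^{n-1}$. I would first try reading the product as running to $\phi^{n-1}$, in which case the largest exponent is $n-1$ and the bound $(\mathrm{Gr}(\phi)+\varepsilon)^{k(n-1)}$ follows immediately. Otherwise, since $\mathrm{Gr}(\phi)+\varepsilon\ge1$ is fixed, I would absorb one factor $(\mathrm{Gr}(\phi)+\varepsilon)^k$ into $C$. The remaining bookkeeping concerns the powers of $\sqrt{M_\Omega}$, which collect to at most $(\sqrt{M_\Omega})^k$ because at most $k$ factors are nontrivial, and ensuring that $C$ depends only on $\Omega$, $k$, $\phi$, $\varepsilon$ and not on $n$.
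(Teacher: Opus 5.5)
Your proposal is correct and follows essentially the paper's argument: expand $L^k_\ell(a)$ by Corollary \ref{LeibnitzL}, bound the $r_i=0$ factors by $1$, and bound the other factors by Lemma \ref{lem 2.23} rerun at each exponent $j\le k$, absorbing the finitely many early factors into a constant. The paper differs only in bookkeeping. It lumps the first $q$ factors into a single element $h=f_1\phi(f_2)\cdots\phi^{q-1}(f_q)$, whose seminorms $L^s_\ell(h)$ for $s\le k$ are bounded by a finite maximum, so the multinomial sum runs over $n-q+1$ slots and gives $(n-q+1)^k$ exactly; this grouping is what your ``cleaner route'' needs, since merely splitting the sum into two blocks still totals $(n+1)^k$, though your bound $(n+1)^k\le(q+1)^k(n-q+1)^k$ works equally well. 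Also, the paper's proof takes $a=f_1\phi(f_2)\cdots\phi^{n-1}(f_n)$, i.e.\ the product up to $\phi^{n-1}$, which matches your first reading of the indexing and gives the exponent $k(n-1)$.
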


\begin{proof}
Let us choose $\varepsilon > 0$ arbitrarily. Then from Lemma \ref{lem 2.23}, we can get hold of $q \in \mathbb N$ such that for all $n \geq q$ 
\begin{displaymath}
    L^{k}_{\ell}(\phi^{n}(f))\leq \sqrt{\M_{\Omega}}\ (\mathrm{Gr}(\phi) + \varepsilon)^{kn}L^{k}_{\ell}(f), \ \forall \ f\in\Omega.
\end{displaymath}
Choose any $n \geq q$ and let $a \in \Omega_n.$ Then there exist $f_1, f_2, \cdots, f_n \in \Omega$ such that $a = f_1 \phi(f_2) \cdots \phi^{n - 1} (f_n).$ Set $h = f_1 \phi \left (f_2 \right ) \cdots \phi^{q - 1} \left (f_q \right ).$ Let $$C' : = \max \left \{L_{\ell}^{s} \left (f) \right )\ :\ f \in \Omega,\ 1 \leq s \leq k \right \},$$ and, $$C_1' : = \max \left \{L_{\ell}^{s} \left (\prod\limits_{i = 1}^{q} \phi^{i - 1} \left (f_i \right ) \right )\ :\ f_i \in \Omega,\ 1 \leq i \leq q,\ 1 \leq s \leq k \right \}$$ and set $C_1 : = \max \left \{1, C_1' \right \}$ and $C_2 : = \max \left \{1, {C'}^k \right \}.$ 
Then by Corollary \ref{LeibnitzL} we have 
\Bea
L_{\ell}^{k} (a) & = & L_{\ell}^{k} \left (h \prod\limits_{i = 0}^{n - q - 1} \phi^{q + i} (f_{q + i + 1}) \right ) \\ & \leq & \sum\limits_{r_1 + r_2 + \cdots + r_{n - q + 1} = k} \binom {k} {r_1\ r_2\ \cdots\ r_{n - q + 1}} L_{\ell}^{r_1} (h) \prod\limits_{i = 0}^{n - q - 1} L_{\ell}^{r_{i + 2}} \left (\phi^{q + i} (f_{q + i + 1}) \right ) \\ & \leq & C_1 \left (\sqrt {M_{\Omega}} \right )^{k} \sum\limits_{r_1 + r_2 + \cdots + r_{n - q + 1} = k} \binom {k} {r_1\ r_2\ \cdots\ r_{n - q + 1}} \prod\limits_{i = 0}^{n - q - 1} (\mathrm{Gr}(\phi) + \varepsilon)^{r_{i + 2} (q + i)} L_{\ell}^{r_{i + 2}} \left ( f_{q + i + 1} \right ) \\ & \leq & C_1 C_2 \left (\sqrt {M_{\Omega}} \right )^{k} (\mathrm{Gr}(\phi) + \varepsilon)^{k (n - 1)} \sum\limits_{r_1 + r_2 + \cdots + r_{n - q + 1} = k} \binom {k} {r_1\ r_2\ \cdots\ r_{n - q + 1}} \\ & = & C \left (\sqrt {M_{\Omega}} \right )^{k} (\mathrm{Gr}(\phi) + \varepsilon)^{k (n - 1)} (n - q + 1)^{k},
\Eea
where $C : = C_1 C_2 > 0.$
\end{proof}

\begin{thm}\label{upperbound}
    Let $\Gamma$ be a finitely generated discrete group of rapid decay with decay exponent say $r$ equipped with a word length function $\ell$. For any $k>r$, if $\mathrm{Mdim}_{L^{k}_{\ell}}(C^{\ast}_{r}(\Gamma))=d<+\infty$, then for any automorphism $\phi$ of $\Gamma$, 
    \begin{align*}
        \mathrm{Entp}(\phi)\leq kd\ h_{\mathrm{geo}}(\phi).
    \end{align*}
\end{thm}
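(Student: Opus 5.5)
Fix a finite set $\Omega\subseteq C_{c}(\Gamma)_{1}$, $\delta>0$ and $\varepsilon>0$. The plan is to place every product in the definition of $\mathrm{Entp}(\phi,\Omega,\delta)$ inside a dilate of the Lip-ball $\mathcal L_{1}$ of $L^{k}_{\ell}$. The dilation factor is given by Lemma \ref{lem 2.24}. After that, the finiteness of the metric dimension controls $D(\cdot,\delta)$ of that dilated ball.

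First, Lemma \ref{lem 2.24} gives $q$ and $C>0$ such that for $n\geq q$ every $a\in\prod_{i=0}^{n-1}\phi^{i}(\Omega)$ satisfies
\begin{displaymath}
L^{k}_{\ell}(a)\leq R_{n}:=C\left(\sqrt{M_{\Omega}}\right)^{k}(\mathrm{Gr}(\phi)+\varepsilon)^{kn}\,n^{k}.
\end{displaymath}
The shift between $n$ and $n-1$ in the indexing does not matter. Hence $a/R_{n}\in\mathcal L_{1}$ up to the strict/non-strict inequality, which I absorb by replacing $R_{n}$ with $2R_{n}$. It follows that $\prod_{i=0}^{n-1}\phi^{i}(\Omega)\subseteq R_{n}\mathcal L_{1}$. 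Scaling is compatible with approximation: if $\mathcal L_{1}\subseteq_{\delta/R_{n}}Z$, then $R_{n}\mathcal L_{1}\subseteq_{\delta}Z$. Therefore
\begin{displaymath}
D\Big(\prod_{i=0}^{n-1}\phi^{i}(\Omega),\delta\Big)\leq D\big(\mathcal L_{1},\delta R_{n}^{-1}\big).
\end{displaymath}

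Second, I use $\mathrm{Mdim}_{L^{k}_{\ell}}=d$. For any $\eta>0$ there is $\delta_{1}>0$ with $D(\mathcal L_{1},t)\leq t^{-(d+\eta)}$ for all $0<t<\delta_{1}$. If $\mathrm{Gr}(\phi)+\varepsilon>1$, then $t_{n}=\delta R_{n}^{-1}\to 0$. If instead $R_{n}$ does not tend to infinity, the count is simply bounded by a constant, which is harmless. So for large $n$,
\begin{displaymath}
\log D\Big(\prod_{i=0}^{n-1}\phi^{i}(\Omega),\delta\Big)\leq (d+\eta)\big(\log\delta^{-1}+\log C+\tfrac{k}{2}\log M_{\Omega}+kn\log(\mathrm{Gr}(\phi)+\varepsilon)+k\log n\big).
\end{displaymath}
Dividing by $n$ and taking $\limsup$ leaves $\mathrm{Entp}(\phi,\Omega,\delta)\leq (d+\eta)k\log(\mathrm{Gr}(\phi)+\varepsilon)$, because the constant, $\log\delta^{-1}$ and $\log n$ terms are sublinear in $n$. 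This bound is uniform in $\delta$ and $\Omega$. Letting $\eta,\varepsilon\to0$ and using continuity of $\log$ at $\mathrm{Gr}(\phi)\geq 1$ gives $\mathrm{Entp}(\phi)\leq kd\,h_{\mathrm{geo}}(\phi)$.

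The main obstacle is the bookkeeping in Lemma \ref{lem 2.24}. The number $q$ there depends on $\Omega$ and $\varepsilon$, and $\mathrm{Gr}(\phi)$ is a supremum over $g$. Only the finitely many group elements in the supports of $\Omega$ matter, so for fixed $\Omega$ this dependence is harmless. It is also essential that the constant $C$ depends on $\Omega$ but not on $n$. Lemma \ref{lem 2.24} provides this, since all growth in $n$ is exponential with rate $k\log(\mathrm{Gr}(\phi)+\varepsilon)$ plus the polynomial factor $n^{k}$. A further point to check is that the element of $\mathcal F(C^{\ast}_{r}(\Gamma))$ approximating $\mathcal L_{1}$ also serves for the products. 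This holds because the products lie in $C_{c}(\Gamma)$, which is exactly the domain on which $\mathcal L_{1}$ was defined.
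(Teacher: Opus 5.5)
Your proposal is correct and follows the paper's proof. You use Lemma \ref{lem 2.24} to place $\prod_{i=0}^{n-1}\phi^{i}(\Omega)$ inside $R_{n}\mathcal L_{1}$ with $R_{n}\sim \widetilde{C}(\mathrm{Gr}(\phi)+\varepsilon)^{kn}n^{k}$, rescale to get $D(\cdot,\delta)\leq D(\mathcal L_{1},\delta R_{n}^{-1})$, and then invoke $\mathrm{Mdim}_{L^{k}_{\ell}}=d$, exactly as the paper does. Your explicit bound $D(\mathcal L_{1},t)\leq t^{-(d+\eta)}$ for small $t$ is a cleaner treatment of the step the paper writes as a product of a $\limsup$ and a limit, where the paper's ``$=d$'' should really read ``$\leq d$''.
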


\begin{proof}
We will exploit Lemma \ref{lem 2.24} to prove this theorem. Fix some $k > r$ and some finite subset $\Omega \subseteq {C_c (\Gamma)}_1$ and let $\varepsilon, \delta > 0.$ Let $\mathcal L_{1}$ be the unit ball of $C_c (\Gamma)$ relative to $L_{\ell}^{k},$ where $\ell$ is some word length function on $\Gamma$ as in Lemma \ref{lem 2.24}. Following the same notation as in Lemma \ref{lem 2.24}, let $\widetilde {C} : = C \left (\sqrt {M_{\Omega}} \right)^{k}$ and let $\lambda_{\varepsilon} : = \mathrm{Gr}(\phi) + \varepsilon.$ Note that $\lambda_{\varepsilon} \geq 1 + \varepsilon > 1.$ Then it follows from Lemma \ref{lem 2.24} that $$\Omega_n : = \prod\limits_{i = 1}^{n} \phi^{i - 1} (\Omega) \subseteq \widetilde {C} \lambda_{\varepsilon}^{k (n - 1)} (n - q + 1)^k \mathcal L_1 \subseteq \widetilde {C} \lambda_{\varepsilon}^{n k} n^k \mathcal L_1,\ \forall n \geq q,$$ for some $q \in \mathbb N$ depending upon $\varepsilon > 0.$ Then for all $n \geq q$ we have
\Bea
D \left (\Omega_n, \delta \right ) & = & D \left (\frac {\Omega_n} {\widetilde{C} \lambda_{\varepsilon}^{nk} n^k}, \frac {\delta} {\widetilde{C} \lambda_{\varepsilon}^{nk} n^{k}} \right ) \\ & \leq & D \left (\mathcal L_1, \frac {\delta} {\widetilde{C} \lambda_{\varepsilon}^{nk} n^{k}} \right ).
\Eea
Thus
\Bea
\mathrm {Entp} (\phi, \Omega, \delta) & = & \limsup\limits_{n \to \infty} \frac {1} {n} \log D \left (\Omega_n, \delta \right ) \\ & \leq & \limsup\limits_{n \to \infty} \frac {1} {n} \log D \left (\mathcal L_1, \frac {\delta} {\widetilde{C} \lambda_{\varepsilon}^{nk} n^{k}} \right ) \\ & = & \limsup\limits_{n \to \infty} \frac {\log D \left (\mathcal L_1, \frac {\delta} {\widetilde{C} \lambda_{\varepsilon}^{n k} n^k} \right )} {\log \left (\delta^{-1} \widetilde{C} \lambda_{\varepsilon}^{n k} n^k \right )} \lim\limits_{n \to \infty} \frac {\log \left (\delta^{-1} \widetilde{C} \lambda_{\varepsilon}^{n k} n^k \right )} {n} \\ & = & d \lim\limits_{n \to \infty} \frac {\log \lambda_{\varepsilon}^{n k}} {n} \\ & = & k d \log \lambda_{\varepsilon} \\ & = & k d \log (\mathrm{Gr}(\phi) + \varepsilon).
\Eea
This is true for any $\varepsilon > 0.$ So letting $\varepsilon \to 0^{+},$ the desired result follows.
\end{proof}
As a corollary, we show that the product entropy of inner automorphisms of a finitely generated group are zero, provided the metric dimension is finite. Recall that for $z\in\Gamma$, the inner automorphism $\mathrm{Ad}_{z}$ is given by $g\mapsto zgz^{-1}$. Then it is easy to check that $(\mathrm{Ad}_{z})^n=\mathrm{Ad}_{z^n}$ for all $n$.
\begin{cor}\label{innerzero}
 Let $\Gamma$ be a finitely generated discrete group of rapid decay with decay exponent say $r$ equipped with a word length function $\ell$. For any $k>r$, if $\mathrm{Mdim}_{L^{k}_{\ell}}(C^{\ast}_{r}(\Gamma))=d<+\infty$, $\mathrm{Entp}(\mathrm{Ad}_z)=0$ for any $z\in\Gamma$.   
\end{cor}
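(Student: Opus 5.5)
The plan is to deduce the corollary directly from Theorem \ref{upperbound}: it suffices to show $h_{\mathrm{geo}}(\mathrm{Ad}_z)=0$, i.e.\ $\mathrm{Gr}(\mathrm{Ad}_z)=1$. Then Theorem \ref{upperbound} gives $\mathrm{Entp}(\mathrm{Ad}_z)\le kd\cdot 0=0$, and since the product entropy is always nonnegative, it equals $0$.

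For nonnegativity, take any nonempty $\Omega\in Pf(C_c(\Gamma)_1)$ and any $\delta>0$. The quantity $D\bigl(\prod_{i=0}^{n-1}\alpha^i(\Omega),\delta\bigr)$ is a nonnegative integer. If it is at least $1$ for all large $n$, then $\log D\ge 0$ and the $\limsup$ is $\ge 0$. Alternatively, we can simply appeal to Theorem \ref{lowerbound}, since $h_{\mathrm{alg}}\ge 0$: take $\mathcal F=\{e\}$, for which $\lvert\mathcal F_n\rvert=1$. With the convention $\log 0=-\infty$, this still gives $\mathrm{Entp}\ge h_{\mathrm{alg}}\ge0$.

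The key step is the growth computation. Using $(\mathrm{Ad}_z)^n=\mathrm{Ad}_{z^n}$ and the subadditivity and symmetry of the word length, we get
\[
\ell\bigl((\mathrm{Ad}_z)^n(g)\bigr)=\ell(z^ngz^{-n})\le 2n\,\ell(z)+\ell(g).
\]
Hence $\limsup_n \ell(\mathrm{Ad}_z^n(g))^{1/n}\le\limsup_n (2n\ell(z)+\ell(g))^{1/n}=1$ for every $g$, because polynomial quantities have $n$-th roots tending to $1$. Taking the supremum over $g$ gives $\mathrm{Gr}(\mathrm{Ad}_z)\le 1$. Combined with the remark preceding Lemma \ref{lem 2.23} that $\mathrm{Gr}(\phi)\ge1$, this yields $\mathrm{Gr}(\mathrm{Ad}_z)=1$ and so $h_{\mathrm{geo}}(\mathrm{Ad}_z)=0$. (The case $g=e$, where $\ell(g)=0$, only contributes a term bounded by the same estimate, so it does not affect the supremum.)

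There is no real obstacle here. The only point needing care is that Theorem \ref{upperbound} is applied with the same $k>r$ for which the metric dimension $d$ is finite, which is exactly the hypothesis of the corollary. I would also note that the bound is uniform in $g$ only in the weak sense needed: the supremum of limsups is still $1$, since each individual limsup equals $1$.
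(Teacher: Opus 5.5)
Your proposal is correct and follows the paper's proof. You reduce via Theorem \ref{upperbound} to showing $h_{\mathrm{geo}}(\mathrm{Ad}_z)=0$, which follows from the linear bound $\ell(z^ngz^{-n})\le 2n\,\ell(z)+\ell(g)$ (the paper uses the slightly cruder bound $2n(\ell(g)+\ell(z))$). Your extra check that $\mathrm{Entp}(\mathrm{Ad}_z)\ge 0$, via Theorem \ref{lowerbound} with $\mathcal{F}=\{e\}$, makes explicit a step the paper leaves implicit. The only blemish is the harmless closing remark that every individual limsup equals $1$: this fails for $g=e$, where it is $0$.
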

\begin{proof}
    By Theorem \ref{upperbound}, it suffices to show that $h_{\mathrm{geo}}(\mathrm{Ad}_{z})=0$ for all $z$. To that end, for any fixed $g\in\Gamma$, by subadditivity and symmetry of the length function, $\ell\Big((\mathrm{Ad}_{z})^{n}(g)\Big)\leq (\ell(g)+\ell(z))2n$ for all $n$. Therefore, $\mathrm{Gr}(\mathrm{Ad}_{z})\leq\sup\limits_{g\in\Gamma} \limsup\limits_{n}\Big((\ell(g)+\ell(z))2n\Big)^{\frac{1}{n}} \leq 1$. Taking logarithm, we get $h_{\mathrm{geo}}(\mathrm{Ad}_{z})=0$.
\end{proof} 
Now we consider the free group on $m$-generators $\mathbb{F}_{m}$ to show that the assumption of finiteness of metric dimension in Corollary \ref{innerzero} is crucial.
 \begin{lem}\label{freegroupinner}
For $m \geq 2,$ let $\mathbb{F}_{m}$ denote the free group on $m$-generators. Then $$\mathrm {Entp} \left (\mathrm {id} \right ) = \infty,$$ where $\mathrm {id}$ denotes the identity automorphism of $C_r^{\ast} \left (\mathbb{F}_{m} \right ).$
\end{lem}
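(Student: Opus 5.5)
The approach mirrors the proof of Theorem \ref{lowerbound}, using the fact that for the identity automorphism the iterated product sets reduce to word balls in $\mathbb{F}_m$. These balls grow exponentially in $n$ for a fixed generating set. Exponential growth alone gives only a finite entropy lower bound, so the plan is to make that bound arbitrarily large by letting the finite set grow.

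Fix free generators $a_1,\dots,a_m$ and, for $N\in\mathbb N$, let $\mathcal F^{(N)}:=\{g\in\mathbb F_m:\ell(g)\le N\}$. Put
\[
\Omega_N:=\{\delta_g: g\in\mathcal F^{(N)}\},
\]
which is a finite subset of $C_c(\mathbb F_m)_1$, since each $\delta_g$ is a unitary. Because $\mathrm{id}^i(\Omega_N)=\Omega_N$, the product set is
\[
\prod_{i=0}^{n-1}\mathrm{id}^i(\Omega_N)=\{\delta_g : g\in (\mathcal F^{(N)})^n\}.
\]
Here $(\mathcal F^{(N)})^n$ is exactly the ball of radius $nN$, because $\mathcal F^{(N)}$ contains $e$ and every element of length at most $nN$ factors as a product of $n$ words of length at most $N$. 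The vectors $\lambda(\delta_g)\delta_e=\delta_g$ are orthonormal in $\ell^2(\mathbb F_m)$. Lemma \ref{VoiculescuCalg} together with Theorem \ref{Voiculescu} then gives, for $0<\delta<1$,
\[
D\Bigl(\prod_{i=0}^{n-1}\mathrm{id}^i(\Omega_N),\delta\Bigr)\ge (1-\delta^2)\,\bigl\lvert\{g:\ell(g)\le nN\}\bigr\rvert .
\]

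Counting reduced words, the ball of radius $R$ in $\mathbb F_m$ contains at least the $2m(2m-1)^{R-1}\ge (2m-1)^R$ words of length exactly $R$. Hence
\[
\frac1n\log D\ge \frac1n\log(1-\delta^2)+N\log(2m-1),
\]
and so $\mathrm{Entp}(\mathrm{id},\Omega_N,\delta)\ge N\log(2m-1)$. Taking the supremum over $\Omega$ gives $\mathrm{Entp}(\mathrm{id})\ge N\log(2m-1)$ for every $N$. Since $m\ge2$ we have $\log(2m-1)>0$, and letting $N\to\infty$ yields $\mathrm{Entp}(\mathrm{id})=\infty$.

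The only point requiring care is that a single fixed $\Omega$ gives only the finite bound $h_{\mathrm{alg}}(\mathrm{id})$ relative to that set. The infinitude comes from taking the supremum over the family $\Omega_N$, which the definition permits because $\Omega$ ranges over all finite subsets of the unit ball. The identification of $(\mathcal F^{(N)})^n$ with the ball of radius $nN$ is routine. Also, no CQMS structure is used, consistent with Remark \ref{Kerrcomparison} making the entropy independent of the choice of $L^k_\ell$.
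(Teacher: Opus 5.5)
Your proof is correct and is essentially the paper's argument. The paper combines Theorem \ref{lowerbound} with the cited fact that $h_{\mathrm{alg}}(\mathrm{id}_{\mathbb{F}_m})=+\infty$; you inline both ingredients, running the Voiculescu estimate from the proof of Theorem \ref{lowerbound} on the sets $\Omega_N$ and replacing the citation with the direct count $\lvert\{g:\ell(g)\le nN\}\rvert\ge(2m-1)^{nN}$, which gives $\mathrm{Entp}(\mathrm{id})\ge N\log(2m-1)$ for every $N$.
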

\begin{proof}
Follows from Theorem \ref{lowerbound} and the well known fact \cite{Dikranjan2013TopologicalEA}*{Proposition 5.3.13.} that the algebraic entropy of the trivial automorphism of $\mathbb{F}_{m}$ is $+\infty$.\end{proof}
In fact, \cite{Dikranjan2013TopologicalEA}*{Proposition 5.3.13.}  says that the algebraic entropy of the identity automorphism of a finitely generated discrete group with exponential growth is always $+\infty$. We shall use this to show that the metric dimension of the $C^{\ast}$-algebras of groups with exponential growth is generically $+\infty$. Recall that the pair $(C^{\ast}_{r}(\Gamma),L^{k}_{\ell})$ is a CQMS for large $k$'s if $\Gamma$ has the rapid decay property. There are many examples of group with exponential growth admitting rapid decay property. Examples include the free groups \cite{Haagerup}*{Lemma 1.5}, Gromov's hyperbolic groups \cites{MR943303, de1988groupes}.
\begin{thm}\label{expoinfty}
    Let $\Gamma$ be a finitely generated discrete group with exponential growth such that $\Gamma$ has the rapid decay property with some decay exponent $r$. Then $\mathrm{Mdim}_{L^{k}_{\ell}}(C^{\ast}_{r}(\Gamma))=+\infty$ for all $k>r$.  
\end{thm}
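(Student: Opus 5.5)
The argument is by contradiction, combining Theorem \ref{upperbound} with Theorem \ref{lowerbound}. Suppose that for some $k>r$ we had $\mathrm{Mdim}_{L^{k}_{\ell}}(C^{\ast}_{r}(\Gamma))=d<+\infty$. Apply Theorem \ref{upperbound} to the identity automorphism $\mathrm{id}$ of $\Gamma$. This gives
\begin{displaymath}
\mathrm{Entp}(\mathrm{id})\leq kd\, h_{\mathrm{geo}}(\mathrm{id}).
\end{displaymath}
For every $g\in\Gamma$ we have $\ell(\mathrm{id}^{n}(g))=\ell(g)$, a constant. Hence $\limsup_{n}\ell(g)^{1/n}\leq 1$, while $\mathrm{Gr}(\mathrm{id})\geq 1$ as noted before Lemma \ref{lem 2.23}. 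So $\mathrm{Gr}(\mathrm{id})=1$ and $h_{\mathrm{geo}}(\mathrm{id})=0$. (This is the special case $z=e$ of Corollary \ref{innerzero}, since $\mathrm{Ad}_{e}=\mathrm{id}$.) Thus $\mathrm{Entp}(\mathrm{id})=0$.

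On the other hand, Theorem \ref{lowerbound} gives $\mathrm{Entp}(\mathrm{id})\geq h_{\mathrm{alg}}(\mathrm{id})$. By \cite{Dikranjan2013TopologicalEA}*{Proposition 5.3.13}, already quoted after Lemma \ref{freegroupinner}, the algebraic entropy of the identity of a finitely generated group of exponential growth is $+\infty$. The intuition is that with $\mathcal{F}=S\cup\{e\}$ one has $\mathcal{F}_n$ equal to the ball of radius $n$, and its cardinality grows exponentially. Hence $\mathrm{Entp}(\mathrm{id})=+\infty$, which contradicts $\mathrm{Entp}(\mathrm{id})=0$. Therefore the metric dimension must be $+\infty$ for every $k>r$.

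The only delicate point is applying Theorem \ref{upperbound} when $d$ is finite and the right-hand side is $kd\cdot 0$. We need its proof to actually give $0$ here rather than an indeterminate form. Tracing that proof, $\lambda_\varepsilon=1+\varepsilon$ and the bound is $kd\log(1+\varepsilon)\to 0$, so this works. The step also uses the fact that $\limsup$ of $\log D(\mathcal{L}_1,\eta)/\log\eta^{-1}$ along the sequence $\eta_n\to0$ is at most $d$.
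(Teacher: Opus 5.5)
Your proposal is correct and is essentially the paper's proof. The paper also gets $\mathrm{Entp}(\mathrm{id}_{\Gamma})=+\infty$ from Theorem~\ref{lowerbound} and the cited Proposition 5.3.13, and then reaches the contradiction through Corollary~\ref{innerzero} with $z=e$, which is exactly your direct use of Theorem~\ref{upperbound} together with $h_{\mathrm{geo}}(\mathrm{id})=0$.
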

\begin{proof}
    We denote the identity automorphism on $\Gamma$ by $\mathrm{id}_{\Gamma}$. By  \cite{Dikranjan2013TopologicalEA}*{Proposition 5.3.13.}, $h_{\mathrm{alg}}(\mathrm{id_{\Gamma}})=+\infty$. So by Theorem \ref{lowerbound}, $\mathrm{Entp}(\mathrm{id}_{\Gamma})=+\infty$. Then  $\mathrm{Mdim}_{L^{k}_{\ell}}(C^{\ast}_{r}(\Gamma))=+\infty$ for all $k>r$ by Corollary \ref{innerzero}. 
\end{proof}
\subsection{Product entropy of groups with the polynomial growth property}
Now we return to the regime of group $C^{\ast}$-algebras with finite metric dimension. By Theorem \ref{finiteMdim}, $(C^{\ast}_{r}(\Gamma),L^{k}_{\ell})$ has finite metric dimension for suitable $k$'s if $\Gamma$ is a finitely generated discrete group with polynomial growth. 
 \begin{thm}\label{boundpolygrowth}
   Let $\Gamma$ be a finitely generated discrete group having the property of polynomial growth with growth exponent $r$; $\phi$ be some automorphism of $\Gamma$. Then 
   \begin{align*}
       \mathrm{Entp}(\phi)\leq rh_{\mathrm{geo}}(\phi).
   \end{align*}
 \end{thm}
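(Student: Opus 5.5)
The plan is to combine Theorem \ref{upperbound} with the metric dimension bound of Theorem \ref{finiteMdim}, and then let the order $k$ of the seminorm tend to infinity. Polynomial growth with exponent $r$ gives the rapid decay property with every exponent $p>\frac{r}{2}$ (\cite{nica2010degree}*{Proposition 2.2}, as used in the proof of Theorem \ref{finiteMdim}). So for every natural number $k>\frac{r}{2}$ we may fix an exponent $p$ with $\frac{r}{2}<p<k$. Then $(C^{\ast}_{r}(\Gamma),L^{k}_{\ell})$ is a CQMS, and Theorem \ref{finiteMdim} gives
\begin{displaymath}
d_k:=\mathrm{Mdim}_{L^{k}_{\ell}}(C^{\ast}_{r}(\Gamma))\leq \frac{2r}{2k-r}<+\infty .
\end{displaymath}

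Next I apply Theorem \ref{upperbound} with decay exponent $p$ in place of $r$; this is legitimate since $k>p$. It yields
\begin{displaymath}
\mathrm{Entp}(\phi)\leq k\,d_k\,h_{\mathrm{geo}}(\phi)\leq \frac{2rk}{2k-r}\,h_{\mathrm{geo}}(\phi).
\end{displaymath}
By Remark \ref{Kerrcomparison}, the left-hand side $\mathrm{Entp}(\phi)$ does not depend on $k$. Also $h_{\mathrm{geo}}(\phi)\geq 0$. If $h_{\mathrm{geo}}(\phi)=+\infty$ there is nothing to prove. Otherwise we let $k\to\infty$: since $\frac{2rk}{2k-r}\to r$, we obtain $\mathrm{Entp}(\phi)\leq r\,h_{\mathrm{geo}}(\phi)$.

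The main point to check is that Theorem \ref{upperbound} really applies with the improved decay exponent $p$ rather than only for $k$ exceeding the growth exponent $r$. Its proof uses rapid decay only through the CQMS property. It also relies on Lemmas \ref{lem 2.23} and \ref{lem 2.24}, which are pure Leibniz-type estimates valid for any $k$. Finally, it uses the finiteness of $d_k$, which Theorem \ref{finiteMdim} supplies for all $k>\frac{r}{2}$.

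Even without the improvement, taking $k>r$ and using only the exponent $r$ still gives a bound $\frac{2rk}{2k-r}\to r$. So the limiting argument works in either case, and the only real content is that $k\,d_k$ stays bounded by a quantity tending to $r$.
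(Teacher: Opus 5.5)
Your proposal is correct and follows the paper's proof. It combines the bound $\mathrm{Mdim}_{L^{k}_{\ell}}(C^{\ast}_{r}(\Gamma))\leq \frac{2r}{2k-r}$ from Theorem \ref{finiteMdim} with Theorem \ref{upperbound} to get $\mathrm{Entp}(\phi)\leq \frac{2rk}{2k-r}h_{\mathrm{geo}}(\phi)$, and then lets $k\to\infty$. The paper simply takes $k>r$, which, as you note yourself, makes your refinement to $k>\frac{r}{2}$ via an intermediate decay exponent $p$ unnecessary for the limit.
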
 
\begin{proof}
    For any natural number $k>r$, $(C^{\ast}_{r}(\Gamma),L^{k}_{\ell})$ is a CQMS for some word length function $\ell$. By Theorem \ref{finiteMdim}, $\mathrm{Mdim}_{L^{k}_{\ell}}(C^{\ast}_{r}(\Gamma))\leq \frac{2r}{2k-r}$. Then by Theorem \ref{upperbound}, for all $k>r$,
    \begin{align*}
        \mathrm{Entp}(\phi)\leq \frac{2kr}{2k-r}h_{\mathrm{geo}}(\phi).
    \end{align*}
    Therefore, letting $k\rightarrow\infty$, we get the desired upper bound.
\end{proof}
Therefore, combining Theorem \ref{lowerbound} and Theorem \ref{boundpolygrowth}, we get the main theorem of the paper$:$
\begin{maintheorem}\label{mainthm}
 Let $\Gamma$ be a finitely generated discrete group having the property of polynomial growth with growth exponent $r$; $\phi$ be some automorphism of $\Gamma$. Then 
   \begin{align*}
       h_{\mathrm {alg}} (\phi) \leq \mathrm{Entp}(\phi)\leq rh_{\mathrm{geo}}(\phi).
   \end{align*}
\end{maintheorem}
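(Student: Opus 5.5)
The statement is a direct combination of two results already proved, so the plan is simply to verify that their hypotheses are met by a finitely generated group $\Gamma$ of polynomial growth with exponent $r$, equipped with a fixed word length $\ell$, and then chain the inequalities.

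For the lower bound $h_{\mathrm{alg}}(\phi)\leq \mathrm{Entp}(\phi)$, I will invoke Theorem \ref{lowerbound} verbatim. That theorem needs only $\Gamma$ finitely generated and $\phi\in\mathrm{Aut}(\Gamma)$. Its proof uses no CQMS structure: it takes the finite sets $\Omega_\varepsilon=\{\delta_g: g\in\mathcal F^\varepsilon\}\subseteq C_c(\Gamma)_1$, observes that the iterated products are again point masses on $\mathcal F^\varepsilon_n$, and applies Theorem \ref{Voiculescu} together with Lemma \ref{VoiculescuCalg}. So nothing beyond the standing assumptions is required.

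For the upper bound $\mathrm{Entp}(\phi)\leq r\,h_{\mathrm{geo}}(\phi)$, I will invoke Theorem \ref{boundpolygrowth}. To make the plan self-contained, I will recall its mechanism.
\begin{enumerate}
\item Polynomial growth implies the rapid decay property (\cite{nica2010degree}). By Theorem \ref{CQMSmain}, $(C^{\ast}_{r}(\Gamma),L^{k}_{\ell})$ is therefore a CQMS for every natural number $k>r$.
\item Theorem \ref{finiteMdim} bounds the metric dimension: $d_k:=\mathrm{Mdim}_{L^{k}_{\ell}}(C^{\ast}_{r}(\Gamma))\leq \frac{2r}{2k-r}<\infty$.
\item Theorem \ref{upperbound} then gives $\mathrm{Entp}(\phi)\leq k\,d_k\,h_{\mathrm{geo}}(\phi)\leq \frac{2kr}{2k-r}\,h_{\mathrm{geo}}(\phi)$. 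By Remark \ref{Kerrcomparison}, the left side is independent of $k$.
\item Letting $k\to\infty$, the coefficient $\frac{2kr}{2k-r}$ tends to $r$, which yields the bound.
\end{enumerate}

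The only point that needs care is a degenerate case. If $h_{\mathrm{geo}}(\phi)=+\infty$, the upper bound is vacuous. Otherwise the limit in $k$ is legitimate, since $h_{\mathrm{geo}}(\phi)\geq0$ is a fixed finite number.

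Concatenating the two inequalities gives $h_{\mathrm{alg}}(\phi)\leq\mathrm{Entp}(\phi)\leq r\,h_{\mathrm{geo}}(\phi)$.

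I expect no real obstacle here. The substantive work sits in Lemma \ref{lem 2.24}, where the higher-order Leibniz property (Corollary \ref{LeibnitzL}) controls $L^k_\ell$ of long products, and in the metric dimension estimate of Theorem \ref{finiteMdim}. Both of these are already established.
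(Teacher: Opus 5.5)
Your proposal matches the paper's proof: the Main Theorem is obtained by concatenating Theorem \ref{lowerbound} with Theorem \ref{boundpolygrowth}, and the proof of the latter is exactly your chain (CQMS structure for $k>r$, the bound from Theorem \ref{finiteMdim}, Theorem \ref{upperbound}, then $k\to\infty$). The degenerate case $h_{\mathrm{geo}}(\phi)=+\infty$ never occurs, since $\mathrm{Gr}(\phi)\le\max_{s\in S}\ell(\phi(s))<\infty$ for a finite generating set $S$, but handling it does no harm.
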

\begin{rem}\label{Lips}
    (1) Note that as the function $k\mapsto\frac{2rk}{2k-r}$ is a decreasing function of $k$ for fixed $r$, we have exerted the full force of $k$ in the ${}_c$Lip-norm $L^{k}_{\ell}$ to obtain the upper bound in the Theorem \ref{boundpolygrowth} by letting $k\rightarrow\infty$.\\
    (2) In a previous version of this article, we had obtained an upper bound in terms of the Lipschitz constant of an automorphism with respect to the length function. More precisely, if $\Gamma$ is a finitely generated discrete group having the polynomial growth property with growth exponent $r$ with a length function $\ell$ and $\phi$ is an automorphism of $\Gamma$ such that $\ell(\phi(g))\leq \lambda \ell(g)$, for some constant $\lambda$, then $\mathrm{Entp}(\phi)\leq r\log\lambda$. But the upper bound obtained in this paper is an improvement and in fact, independent of the choice of word length functions in the same quasi-isometry class. 
 \end{rem}
     Now we discuss examples. Let $\Gamma$ be a finitely generated discrete group with the polynomial growth property. Then by \cite{Gromov}, $\Gamma$ is a virtually nilpotent group i.e. $\Gamma$ is a finite extension of a nilpotent subgroup $N\subset\Gamma$. Let $\phi$ be an automorphism of $\Gamma$ such that $\phi$ is not eventually trivial and keeps $N$ invariant. Then by \cite{MR4168745}*{Theorem 2.8}, $h_{\mathrm{geo}}(\phi)=h_{\mathrm{geo}}(\phi\vert_{N})$. Therefore, the geometric entropy of an automorphism essentially boils down to the geometric entropy of nilpotent groups. So, we assume $\Gamma$ is a torsionless nilpotent finitely generated discrete group. Recall the lower central series 
     \begin{displaymath}
         \Gamma_{1}\supseteq\Gamma_{2}\supseteq\ldots\supseteq\Gamma_{n}=\{e\},
     \end{displaymath}
    for some finite $n$ where $\Gamma_{i}$ is defined recursively by
    \begin{displaymath}
          \Gamma_{1}:=\Gamma, \ \Gamma_{2}:=[\Gamma,\Gamma], \ \Gamma_{3}:=[\Gamma_{2},\Gamma],\ldots, \ \Gamma_{i+1}:=[\Gamma_{i},\Gamma]. 
    \end{displaymath}
    Any automorphism $\phi$ of $\Gamma$ canonically induces automorphism $\phi_{i}:\Gamma_{i}/\Gamma_{i+1}\rightarrow\Gamma_{i}/\Gamma_{i+1}$ for all $i$. Each $\phi_{i}$ is essentially an automorphism of some $\mathbb{Z}^{d_{i}}$ and therefore an element of $GL_{d_{i}}(\mathbb{Z})$ for some natural number $d_{i}$ for each $i$. We denote the matrix of $\phi_{i}$ by $\phi_{i}$ itself with some abuse of notation. If we denote the spectral radius of $\phi_{i}$ by $\rho_{i}$, then by \cite{MR4168745}*{Theorem 3.7},
    \begin{equation} \label{abelianization}
        h_{\mathrm{geo}}(\phi)=\log\rho_1.
    \end{equation}
   As for the lower bound, for any automorphism $\phi$ of a finitely generated discrete group $\Gamma$ that sends a subgroup $H$ onto itself, we denote the induced automorphism on $\Gamma/H$ by $\hat {\phi}$. Then by \cite{MR0385834}*{Proposition 1.5}, 
\begin{equation} \label{algentlow} h_{\mathrm{alg}}(\phi)\geq\mathrm{max}\{h_{\mathrm{alg}}(\phi\vert_{H}),h_{\mathrm{alg}}(\hat {\phi})\} \end{equation}.
\hspace{-4.5mm} Therefore, for a nilpotent group $\Gamma$, \begin{displaymath}h_{\mathrm{alg}}(\phi)\geq\mathrm{max}_{i}\{h_{\mathrm{alg}}(\phi_{i})\}.\end{displaymath} Each $\phi_{i}$ is an automorphism of some $\mathbb{Z}^{d_{i}}$ and therefore, by \cite{MR540634}*{Theorem 6}, their algebraic entropies are known in principle. So, for a torsionless virtually nilpotent group $\Gamma$ and an automorphism $\phi$ which is not eventually trivial, we have more or less complete information about the lower and upper bound of $\mathrm{Entp}(\phi)$. As an application we shall produce a class of outer automorphisms on a finitely generated discrete group $\Gamma$ such that the product entropy is zero. \\ 
 \indent  Let $\phi\in GL_{d}(\mathbb{Z})$. Then $\mathbb Z^{d} \rtimes_{\varphi} \mathbb Z$ is the finitely generated discrete group generated by $\mathbb{Z}^{d}$ and $\mathbb{Z}$ such that $txt^{-1}=\phi(x)$ for all $x\in\mathbb{Z}^{d},$ where $t$ denotes the generator of $\mathbb{Z}$. It is easy to see that any $\psi\in GL_{d}(\mathbb{Z})$ is a natural automorphism of $\mathbb{Z}^{d}\rtimes_{\phi}\mathbb{Z}$ if $\phi\psi=\psi\phi$. The automorphism is given by
\begin{displaymath}
    x\mapsto\psi(x), \ x\in\mathbb{Z}^d; \ t\mapsto t.
\end{displaymath}
It is clear that the group is generated by the generators of $\mathbb{Z}^d$ (say $x_{1},x_{2},\ldots,x_{d}$) and $t$ and has the natural word length function say $\ell$. \\Consider the non-abelian group $\Gamma=\mathbb{Z}^d\rtimes_{-I}\mathbb{Z}$. If we denote the generators of $\mathbb{Z}^d$ by $x_{1},x_{2},\ldots,x_{d}$ and the generator of $\mathbb{Z}$ by $t$, then it has an abelian subgroup $H=\langle x_{1},x_2,\ldots,x_{d},t^2\rangle\cong\mathbb{Z}^{d+1}$. In fact, $[\Gamma:H]=2$. Consider an upper triangular matrix $\phi\in GL_{d}(\mathbb{Z})$ with diagonal entries $1$ and atleast one non-diagonal entry non-zero. Then $\phi$ gives an outer automorphism $\widetilde {\phi}$ of $\Gamma$ and it also restricts to $H$ to produce an automorphism of $\mathbb{Z}^{d+1}$. Moreover, it is easy to see that $\Gamma$ is torsion-free and $\widetilde{\phi}$ is not eventually trivial. Therefore, by \cite{MR4168745}*{Theorem 2.8}, $h_{\mathrm{geo}}(\widetilde {\phi})=h_{\mathrm{geo}}(\widetilde {\phi} \vert_{\mathbb{Z}^{d+1}})$. But the matrix of $\widetilde {\phi} \vert_{\mathbb{Z}^{d+1}}$ (with respect to the generators of $H$) is given by the block matrix $\begin{pmatrix} \phi & 0 \\ 0 & 1 \end{pmatrix}$ which is again an upper triangular matrix with diagonal entries $1$. Therefore, the spectral radius of $\widetilde{\phi} \rvert_{\mathbb Z^{d + 1}}$ is $1$. It is a standard fact that $\mathbb{Z}^d\rtimes_{-I}\mathbb{Z}$ has polynomial growth. Hence by Theorem \ref{boundpolygrowth}, we have the following$:$
 \begin{thm}
  Let $\phi$ be an upper triangular matrix in $GL_{d}(\mathbb{Z})$ with diagonal entries $1$ and at least one non-diagonal entry non-zero. Then $\widetilde {\phi}$ is an outer automorphism of $\mathbb{Z}^d\rtimes_{-I}\mathbb{Z}$ and $\mathrm{Entp}(\widetilde{\phi})=h_{\mathrm{geo}}(\widetilde {\phi})=0$.    
 \end{thm}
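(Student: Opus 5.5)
The plan is to sandwich $\mathrm{Entp}(\widetilde{\phi})$ between $0$ and $r\,h_{\mathrm{geo}}(\widetilde{\phi})$ and then show $h_{\mathrm{geo}}(\widetilde{\phi})=0$. Nonnegativity is automatic: $D(\cdot,\delta)\geq 1$ for any nonempty set not contained in the $\delta$-ball around $0$. In any case, the Main Theorem gives $\mathrm{Entp}(\widetilde{\phi})\geq h_{\mathrm{alg}}(\widetilde{\phi})\geq 0$. For the upper bound, $\Gamma=\mathbb{Z}^d\rtimes_{-I}\mathbb{Z}$ contains $H\cong\mathbb{Z}^{d+1}$ with index $2$, so it has polynomial growth with exponent $r=d+1$. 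Theorem \ref{boundpolygrowth} then yields $\mathrm{Entp}(\widetilde{\phi})\leq (d+1)\,h_{\mathrm{geo}}(\widetilde{\phi})$. So everything reduces to $h_{\mathrm{geo}}(\widetilde{\phi})=0$, together with checking that $\widetilde{\phi}$ is an outer automorphism.

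\textbf{Well-definedness and outerness.} Since $-I$ is central in $GL_d(\mathbb{Z})$, $\phi$ commutes with it, so $x\mapsto\phi(x)$, $t\mapsto t$ defines an automorphism $\widetilde{\phi}$. Conjugation by $x^a t^m$ acts on $\mathbb{Z}^d$ as $(-I)^m$. If $\widetilde{\phi}$ were inner, then $\phi=\pm I$ on $\mathbb{Z}^d$. This is impossible, because $\phi$ is unipotent upper triangular with a nonzero off-diagonal entry, so $\phi\neq I$, and $\phi\neq -I$ since its diagonal entries are $1$.

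\textbf{Vanishing of $h_{\mathrm{geo}}$.} I would not lean on citations here, but bound $\ell(\widetilde{\phi}^n(g))$ directly. Write $g=x^a t^m$ with $a\in\mathbb{Z}^d$. Then $\widetilde{\phi}^n(g)=x^{\phi^n a}t^m$, so
\[
\ell(\widetilde{\phi}^n(g))\leq \|\phi^n a\|_1+|m|.
\]
Because $\phi=I+N$ with $N$ nilpotent ($N^d=0$), the binomial expansion gives $\phi^n=\sum_{j<d}\binom{n}{j}N^j$. Hence the entries of $\phi^n$ grow at most like $C n^{d-1}$, and $\ell(\widetilde{\phi}^n(g))\leq C_g(1+n)^{d-1}$. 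Therefore $\limsup_n \ell(\widetilde{\phi}^n(g))^{1/n}\leq 1$ for every $g$, which gives $\mathrm{Gr}(\widetilde{\phi})=1$ (recall $\mathrm{Gr}\geq 1$) and $h_{\mathrm{geo}}(\widetilde{\phi})=0$. The same conclusion follows from the paper's route via \cite{MR4168745}*{Theorem 2.8}: the restriction of $\widetilde{\phi}$ to $H$ is unipotent with spectral radius $1$.

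The main obstacle is only bookkeeping. First, one must confirm that the polynomial growth exponent of $\Gamma$ is finite, which is immediate from the finite index of $H$. Second, Theorem \ref{boundpolygrowth} should apply to the word length of the chosen generating set. Both are routine, and combining them gives $0\leq\mathrm{Entp}(\widetilde{\phi})\leq 0=h_{\mathrm{geo}}(\widetilde{\phi})$.
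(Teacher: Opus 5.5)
Your proof is correct. Its skeleton matches the paper's:
\begin{itemize}
\item polynomial growth of $\Gamma=\mathbb{Z}^d\rtimes_{-I}\mathbb{Z}$ via the index-two subgroup $H\cong\mathbb{Z}^{d+1}$;
\item the upper bound $\mathrm{Entp}(\widetilde{\phi})\leq (d+1)\,h_{\mathrm{geo}}(\widetilde{\phi})$ from Theorem \ref{boundpolygrowth};
\item trivial nonnegativity, witnessed for instance by $\Omega=\{\delta_e\}$ and $\delta<1$, or by $h_{\mathrm{alg}}\geq 0$.
\end{itemize}

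The genuine difference is in the key step $h_{\mathrm{geo}}(\widetilde{\phi})=0$. The paper invokes \cite{MR4168745}*{Theorem 2.8} to replace $\widetilde{\phi}$ by its restriction to $H$. That citation is why the paper has to note that $\Gamma$ is torsion-free and $\widetilde{\phi}$ is not eventually trivial. It then reads off $h_{\mathrm{geo}}$ as the logarithm of the spectral radius of the unipotent block matrix $\begin{pmatrix}\phi & 0\\ 0 & 1\end{pmatrix}$.

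You instead compute directly from Definition \ref{geoment}. You use the normal form $g=x^{a}t^{m}$ and the identity $\widetilde{\phi}^{n}(x^{a}t^{m})=x^{\phi^{n}a}t^{m}$. The binomial expansion of $(I+N)^{n}$ then gives $\ell(\widetilde{\phi}^{n}(g))=O(n^{d-1})$.

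Your route is self-contained and needs none of the hypotheses of the cited theorem. It also gives an explicit polynomial rate for orbit growth. The paper's route is the one that carries over to the general virtually nilpotent setting it discusses afterwards. There no convenient global normal form is available, and the spectral radius on a finite-index subgroup, or on the abelianization, is the natural invariant.

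You also supply an actual argument for outerness: inner automorphisms act on $\mathbb{Z}^d$ by $\pm I$. You likewise check that $\widetilde{\phi}$ is well defined because $-I$ is central. The paper only asserts both of these.
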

Let us produce a concrete example of an automorphism $\phi$ of a group $\Gamma$ such that $\mathrm{Entp}(\phi)$ is non-zero and finite. To that end, consider the discrete Heisenberg group $H_3 (\mathbb Z).$ It is described as the set $\mathbb Z^{3}$ of integer triples endowed with the following multiplication$:$
$$(x, y, z) \cdot (u, v, w) = (x + u + y w, y + v, z + w).$$ Equivalently, $H_3 (\mathbb Z)$ may be presented as follows$:$
$$\left \langle \alpha, \beta\ :\ [\alpha, [\alpha, \beta]] = 1 = [\beta, [\alpha, \beta]] \right \rangle,$$ where $\alpha$ (resp. $\beta$) corresponds to the generator $(0, 1, 0)$ (resp. $(0, 0, 1)$) \cite{Kahn}*{Proposition 3}. Note that $H_3 (\mathbb Z)$ is a finitely-generated torsion-free nilpotent group ($2$-step) having the property of polynomial growth with growth exponent $4.$ The associated lower central series is given by $$\Gamma_1 \supseteq \Gamma_2 \supseteq \Gamma_3,$$ where $\Gamma_1 = H_3 (\mathbb Z), \Gamma_2 = \left [H_3 (\mathbb Z), H_3 (\mathbb Z) \right ] = \left \langle [\alpha, \beta] \right \rangle \cong \mathbb Z$ and $\Gamma_3 = \left [H_3 (\mathbb Z), \left \langle [\alpha, \beta] \right \rangle \right ] = \{1\}.$ Consider the automorphism $\phi : H_3 (\mathbb Z) \rightarrow H_3 (\mathbb Z)$ given by $\phi (\alpha) = (0, 2, 1)$ and $\phi (\beta) = (0, 1, 1).$ Following the notation introduced earlier, $\phi_1$ is the induced automorphism on the abelianization $H_3 (\mathbb Z)_{\mathrm {ab}}.$ Identifying $H_3 (\mathbb Z)_{\mathrm {ab}}$ with $\mathbb Z^{2}$ in the obvious way, it turns out that the matrix of $\phi_1$ is $\begin{pmatrix} 2 & 1 \\ 1 & 1 \end{pmatrix} \in GL_2 (\mathbb Z).$ Again by the same notation, $\phi_{2}$ is the induced automorphism of $\mathbb{Z}$ which is multiplication by $1$. Therefore, $\mathrm{max}\{h_{\mathrm{alg}}(\phi_1),h_{\mathrm{alg}}(\phi_2)\}= \log \left (\frac{3+\sqrt{5}}{2} \right )$. Also by the same notation, $\rho_1=\frac{3+\sqrt{5}}{2}$ as the spectral radius of the matrix $\begin{pmatrix} 2 & 1 \\ 1 & 1 \end{pmatrix}$ is $\frac {3 + \sqrt {5}} {2}$. So, combining the Main Theorem with the equations \eqref{abelianization} and \eqref{algentlow}, we get the following:
\begin{prop}
    Let $H_3 (\mathbb Z)$ be the discrete Heisenberg group and $\phi$ be the natural automorphism of $H_3 (\mathbb Z)$ induced by the matrix of Arnold's cat map. Then
    \begin{displaymath}
     \log \left (\frac {3 + \sqrt {5}} {2} \right ) \leq \mathrm {Entp} (\phi) \leq 4 \log \left (\frac {3 + \sqrt {5}} {2} \right ).   
    \end{displaymath}
\end{prop}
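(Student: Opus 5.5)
The plan is to sandwich $\mathrm{Entp}(\phi)$ between the two bounds of the Main Theorem, which applies because $H_3(\mathbb Z)$ is finitely generated and has polynomial growth with exponent $r=4$. It then remains to identify $h_{\mathrm{alg}}(\phi)$ from below and $h_{\mathrm{geo}}(\phi)$ exactly.

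First I will check that $\phi$ really is an automorphism. The images $\phi(\alpha)=(0,2,1)$ and $\phi(\beta)=(0,1,1)$ must satisfy the defining relations $[a,[a,b]]=1=[b,[a,b]]$. This is automatic in a $2$-step nilpotent group, since every commutator is central. Surjectivity comes from the abelianized matrix $\begin{pmatrix}2&1\\1&1\end{pmatrix}$, which has determinant $1$. Its image therefore generates $H_3(\mathbb Z)$ modulo the centre, and a standard argument for nilpotent groups (generation modulo the derived subgroup implies generation) gives surjectivity. Injectivity follows because $H_3(\mathbb Z)$ is Hopfian, or more directly because $\phi$ induces $\pm1$ on the centre $\Gamma_2\cong\mathbb Z$: the commutator $[\phi(\alpha),\phi(\beta)]$ is $[\alpha,\beta]^{\det\phi_1}=[\alpha,\beta]$.

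For the lower bound, I apply \eqref{algentlow} with $H=\Gamma_2=[H_3(\mathbb Z),H_3(\mathbb Z)]$, which is characteristic and hence $\phi$-invariant. This gives $h_{\mathrm{alg}}(\phi)\ge h_{\mathrm{alg}}(\hat\phi)$, where $\hat\phi=\phi_1$ acts on $H_3(\mathbb Z)/\Gamma_2\cong\mathbb Z^2$ by the cat map matrix. By \cite{MR540634}, $h_{\mathrm{alg}}(\phi_1)$ equals the sum of $\log|\lambda_i|$ over eigenvalues with $|\lambda_i|>1$. The eigenvalues are $\frac{3\pm\sqrt5}{2}$, so $h_{\mathrm{alg}}(\phi_1)=\log\frac{3+\sqrt5}{2}$. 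Combining this with the left inequality of the Main Theorem yields the lower bound.

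For the upper bound, I use \eqref{abelianization}, namely $h_{\mathrm{geo}}(\phi)=\log\rho_1$, where $\rho_1$ is the spectral radius of $\phi_1$, here $\frac{3+\sqrt5}{2}$. The right inequality of the Main Theorem with $r=4$ then gives $\mathrm{Entp}(\phi)\le 4\log\frac{3+\sqrt5}{2}$.

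The main point needing care is the correctness of the growth exponent, since the Main Theorem's bound uses the exponent $r$ from the polynomial growth inequality. I will cite the Bass--Guivarc'h formula: $\sum_i i\cdot\mathrm{rank}(\Gamma_i/\Gamma_{i+1})=1\cdot2+2\cdot1=4$. This makes the ball growth comparable to $n^4$, so the inequality $\lvert B(n)\rvert\le C(1+n)^4$ holds. The hypotheses of \eqref{abelianization} (torsion-free nilpotent) also have to be confirmed, and they hold here. With these checks the two bounds combine to give the stated inequality.
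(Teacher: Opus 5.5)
Your proposal is correct and follows the paper's own argument. Like the paper, you apply the Main Theorem with $r=4$, bound $h_{\mathrm{alg}}(\phi)$ below by $h_{\mathrm{alg}}(\phi_1)=\log\frac{3+\sqrt5}{2}$ via \eqref{algentlow} and Peters' formula, and get $h_{\mathrm{geo}}(\phi)=\log\rho_1$ from \eqref{abelianization}; your checks that $\phi$ is a genuine automorphism and that the growth exponent is $4$ (Bass--Guivarc'h) fill in details the paper simply asserts.
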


\begin{rem}
    
    \begin{itemize}
        \item Our study suggests that the notions of metric dimension and product entropy introduced by Kerr behave well in the context of group $C^{\ast}$-algebras when the underlying group has polynomial growth. We have seen that for groups with exponential growth, the metric dimension of the associated group $C^{\ast}$-algebras is always $+\infty$.
        \vspace{2mm}
        \item We conjecture that if $\Gamma$ is a finitely generated discrete group with exponential growth, then the product entropy of any automorphism of $\Gamma$ is $+\infty$. Our conjecture will be true if one has a positive answer to the same conjecture about the algebraic entropy of automorphisms of finitely generated groups with exponential growth (see \cite{Dikranjan2013TopologicalEA}*{conjecture 5.3.15}). However it is well known that for any group $\Gamma,$ the inner automorphisms have the same growth type as that of $\Gamma$ (i.e., of $\mathrm {id}_{\Gamma}$) and they have the same algebraic entropy \cite{} So exponential growth of $\Gamma$ ensures infinite algebraic entropy of inner automorphisms \cite{Dikranjan2013TopologicalEA}*{Proposition 5.3.13.} and hence infinite product entropy by the virtue of Theorem \ref{lowerbound}. 

    \vspace{2mm}
        \item We conjecture that the upper bound of the Main theorem is optimal i.e. there is an automorphism $\phi$  of some finitely generated discrete group of polynomial growth such that $\mathrm{Entp}(\phi)$ attains the upper bound.
        \end{itemize}
\end{rem}
\appendix
\section{Higher order Leibnitz property}
\begin{lem} \label{leibnitz1}
For any $k \in \mathbb N$ we have \Bea \Delta_{\ell}^{k} (f_{1}f_{2}) = \sum\limits_{j = 0}^{k} \binom {k} {j} \Delta_{\ell}^{k - j} (f_{1}) \Delta_{\ell}^{j} (f_{2}), \Eea for all $f_{1},f_{2}\in C_{c}(\Gamma),$ where $\Delta_{\ell}^{0} = \text {id}$ and $\Delta_{\ell}^{n} (f)$ denotes the $n$-fold commutator of $M_{\ell}$ with $\lambda(f)$.
\end{lem}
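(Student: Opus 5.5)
The plan is to prove the two-factor formula by induction on $k$, using only that $\Delta_{\ell}^{1}=[M_{\ell},\,\cdot\,]$ acts as a derivation on products of operators that preserve $C_{c}(\Gamma)$. Throughout we work on the dense invariant domain $C_{c}(\Gamma)\subseteq\ell^{2}(\Gamma)$. This domain is preserved by $M_{\ell}$, by $\lambda(f)$ for $f\in C_{c}(\Gamma)$, and hence by every iterated commutator $\Delta_{\ell}^{j}(f)$. All identities are therefore verified as identities of operators on $C_{c}(\Gamma)$. Since the operators $\Delta_{\ell}^{j}(f)$ are bounded (as recalled before Lemma \ref{higherleibnitz}), the identities extend to $\ell^{2}(\Gamma)$ by density.

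The key observation is the derivation rule. For operators $S,T$ mapping $C_{c}(\Gamma)$ into itself,
\begin{displaymath}
[M_{\ell},ST]=[M_{\ell},S]\,T+S\,[M_{\ell},T]
\end{displaymath}
holds on $C_{c}(\Gamma)$: add and subtract $SM_{\ell}T$. Since $\lambda(f_{1}f_{2})=\lambda(f_{1})\lambda(f_{2})$, the base case $k=1$ is this rule with $S=\lambda(f_{1})$ and $T=\lambda(f_{2})$. The case $k=0$ is trivial with the convention $\Delta_{\ell}^{0}(f)=\lambda(f)$.

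For the inductive step, assume the formula holds for $k$. Apply $[M_{\ell},\cdot\,]$ to both sides. The left side becomes $\Delta_{\ell}^{k+1}(f_{1}f_{2})$ by definition. On the right side, apply the derivation rule to each summand $\Delta_{\ell}^{k-j}(f_{1})\Delta_{\ell}^{j}(f_{2})$. This gives
\begin{displaymath}
\Delta_{\ell}^{k+1-j}(f_{1})\Delta_{\ell}^{j}(f_{2})+\Delta_{\ell}^{k-j}(f_{1})\Delta_{\ell}^{j+1}(f_{2}).
\end{displaymath}
Reindex the second group of terms by $j\mapsto j-1$ and combine coefficients using Pascal's rule $\binom{k}{j}+\binom{k}{j-1}=\binom{k+1}{j}$. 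The boundary terms $j=0$ and $j=k+1$ each carry coefficient $1$, which matches $\binom{k+1}{0}$ and $\binom{k+1}{k+1}$.

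The only point needing care, and so the main potential obstacle, is domain bookkeeping. The derivation rule is applied to unbounded $M_{\ell}$ composed with bounded operators, so we must confirm that each $\Delta_{\ell}^{j}(f)$ maps $C_{c}(\Gamma)$ into $C_{c}(\Gamma)$. This holds because $\lambda(f)$ maps finitely supported vectors to finitely supported vectors and $M_{\ell}$ is diagonal. Once this is in place, the computation is the usual binomial induction.

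Lemma \ref{higherleibnitz} (the multinomial version) then follows by a further induction on $n$. Write $f_{1}\cdots f_{n}=(f_{1}\cdots f_{n-1})f_{n}$ and apply the two-factor formula. Then expand $\Delta_{\ell}^{k-j}(f_{1}\cdots f_{n-1})$ by the induction hypothesis, and use
\begin{displaymath}
\binom{k}{j}\binom{k-j}{r_{1}\ \cdots\ r_{n-1}}=\binom{k}{r_{1}\ \cdots\ r_{n-1}\ j}.
\end{displaymath}
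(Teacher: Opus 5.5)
Your proof is correct and follows the paper's argument. Both proceed by induction on $k$, using that $[M_{\ell},\cdot\,]$ acts as a derivation on operators preserving the dense invariant domain $C_{c}(\Gamma)$, then reindexing and applying Pascal's rule $\binom{k}{j}+\binom{k}{j-1}=\binom{k+1}{j}$, with the identity extended from $C_{c}(\Gamma)$ to $\ell^{2}(\Gamma)$ by boundedness. Your closing sketch of the multinomial version (induction on $n$ by splitting off the last factor) is organised differently from the paper's double induction and lies outside this statement, but it is also sound.
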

\begin{proof}
Observe that for any $f\in C_{c}(\Gamma)$, $\lambda(f)$ maps $C_{c}(\Gamma)$ into $C_{c}(\Gamma)$. Therefore, the following algebraic manipulation can be done on the dense subspace $C_{c}(\Gamma)$ of $\ell^{2}(\Gamma)$ and subsequently extended to all of $\ell^{2}(\Gamma)$. We start with the easy to prove equality, which proves the lemma for $k=1$: 
\begin{displaymath} 
\Delta_{\ell}^{1} (f_{1}f_{2}) = \Delta_{\ell}^{1} (f_{1}) \lambda(f_{2}) + \lambda(f_{1}) \Delta_{\ell}^{1} (f_{2}).\end{displaymath}
Now suppose the result holds for $k,$ for some $k \geq 1.$  \Bea
&&\Delta_{\ell}^{k + 1} (f_{1}f_{2})\\ & = & \left [M_{\ell}, \Delta_{\ell}^{k} (f_{1}f_{2}) \right ] \\ & = & \sum\limits_{j = 0}^{k} \binom {k} {j} \left [M_{\ell}, \Delta_{\ell}^{k - j} (f_{1}) \Delta_{\ell}^{j} (f_{2}) \right ] \\ & = & \sum\limits_{j = 0}^{k} \binom {k} {j} \left [M_{\ell}, \Delta_{\ell}^{k-j} (f_{1}) \right ] \Delta_{\ell}^{j} (f_{2}) + \sum\limits_{j = 0}^{k} \binom {k} {j} \Delta_{\ell}^{k - j} (f_{1}) \left [M_{\ell}, \Delta_{\ell}^{j} (f_{2}) \right ] \\ & = & \sum\limits_{j = 0}^{k} \binom {k} {j} \Delta_{\ell}^{k-j+1} (f_{1}) \Delta_{\ell}^{j} (f_{2}) + \sum\limits_{j = 0}^{k} \binom {k} {j} \Delta_{\ell}^{k - j} (f_{1}) \Delta_{\ell}^{j+1} (f_{2}) \\ & = & \sum\limits_{j = 0}^{k} \binom {k} {j} \Delta_{\ell}^{k-j+1} (\lambda(f_{1})) \Delta_{\ell}^{j} (f_{2}) + \sum\limits_{j = 1}^{k + 1} \binom {k} {j - 1} \Delta_{\ell}^{k - j + 1} (f_{1}) \Delta_{\ell}^{j} (f_{2}) \\ & = & \Delta_{\ell}^{k + 1} (f_{1}) \lambda(f_{2}) + \sum\limits_{j = 1}^{k} \left (\binom {k} {j} + \binom {k} {j - 1} \right ) \Delta_{\ell}^{k - j + 1} (f_{1}) \Delta_{\ell}^{j} (f_{2}) + \lambda(f_{1}) \Delta_{\ell}^{k + 1} (f_{2}) \\ & = & \Delta_{\ell}^{k + 1} (f_{1}) \lambda(f_{2}) + \sum\limits_{j = 1}^{k} \binom {k + 1} {j} \Delta_{\ell}^{k + 1 - j} (f_{1}) \Delta_{\ell}^{j} (f_{2}) + \lambda(f_{1}) \Delta_{\ell}^{k + 1} (f_{2}) \\ & = & \sum\limits_{j = 0}^{k + 1} \binom {k + 1} {j} \Delta_{\ell}^{k + 1 - j} (f_{1}) \Delta_{\ell}^{j} (f_{2})
\Eea
This proves the result for $k + 1.$ Hence the result follows.\end{proof}
{\textbf Proof of Lemma \ref{higherleibnitz}}: We prove the lemma using two-variable induction. For $k, n \in \mathbb N,$ let the statement be denoted by $P (k, n).$ The statement $P (1, 1)$ holds trivially. Now take any $n > 1$ and assume that $P (1, k)$ holds for all $k < n.$ We have to first show that $P (1, n)$ holds. Now using Lemma \ref{leibnitz1} for $k = 1$ we have \Bea \Delta_{\ell}^{1} \left (x_{1} x_{2} \cdots x_{n} \right ) & = & \Delta_{\ell}^{1} \left (x_{1} x_{2} \cdots x_{n - 1} \right ) x_{n} + x_{1} x_{2} \cdots x_{n - 1} \Delta_{\ell}^{1} \left (x_{n} \right ) \\ & = & \sum\limits_{r_{1} + r_{2} + \cdots + r_{n} = 1} \prod\limits_{i = 1}^{n} \Delta_{\ell}^{r_{i}} \left (x_{i} \right ).
\Eea
This proves that $P(1, n)$ holds for any $n \in \mathbb N.$ Next we assume that $P (m, n)$ holds for any $m < k$ with $k > 1$ and for all $n \in \mathbb N.$
To finish the induction step we need to show that $P (k, n)$ holds. 
Then we have 
\Bea && \Delta_{\ell}^{k} \left (x_{1} x_{2} \cdots x_{n} \right )
\\ 
& = & \left [M_{\ell}, \Delta_{\ell}^{k - 1} \left (x_{1} x_{2} \cdots x_{n} \right ) \right ] \\ 
& = & \sum\limits_{r_{1} + r_{2} + \cdots + r_{n} = k - 1} \dbinom {k - 1} {r_{1}\ r_{2}\ \cdots\ r_{n}} \left [M_{\ell}, \prod\limits_{i = 1}^{n} \Delta_{\ell}^{r_{i}} \left (x_{i} \right ) \right ] \\ & = & \sum\limits_{r_{1} + r_{2} + \cdots + r_{n} = k - 1} \dbinom {k - 1} {r_{1}\ r_{2}\ \cdots\ r_{n}} \Delta_{\ell}^{1} \left (\prod\limits_{i = 1}^{n} \Delta_{\ell}^{r_{i}} \left (x_{i} \right ) \right ) \\ & = & \sum\limits_{r_{1} + r_{2} + \cdots + r_{n} = k - 1} \dbinom {k - 1} {r_{1}\ r_{2}\ \cdots\ r_{n}} \sum\limits_{t_{1} + t_{2} + \cdots + t_{n}} \prod\limits_{i = 1}^{n} \Delta_{\ell}^{r_{i} + t_{i}} \left (x_{i} \right ) \\ 
& = & \sum_{s_{1} + s_{2} + \cdots + s_{n} = k}
\left (
\begin{aligned}[t]
&\dbinom{k-1}{s_{1}-1\ s_{2}\ \cdots\ s_{n}}
 + \dbinom{k-1}{s_{1}\ s_{2}-1\ \cdots\ s_{n}}  \\
&+ \cdots
+ \dbinom{k-1}{s_{1}\ s_{2}\ \cdots\ s_{n}-1}
\end{aligned}
\right )
\prod_{i=1}^{n} \Delta_{\ell}^{s_{i}} (x_{i}) \\ & = & \sum\limits_{s_{1} + s_{2} + \cdots + s_{n} = k} \dbinom {k} {s_{1}\ s_{2}\ \cdots\ s_{n}} \prod\limits_{i = 1}^{n} \Delta_{\ell}^{s_{i}} \left (x_{i} \right ).
\Eea
Thus $P (k, n)$ holds and hence the result follows.

\vspace{5mm}

{\bf Acknowledgement}: The first author acknowledges the financial support under the Senior Research Fellowship Scheme funded by UGC. The authors are grateful to Dr. Shubhabrata Das for several helpful discussions on geometric group theory.

\bibliographystyle{amsplain}
\bibliography{References}
   
\end{document}